\theoremstyle{definition}
\newtheorem{theorem}{Theorem}[section]
\newtheorem{lemma}[theorem]{Lemma}
\newtheorem{proposition}[theorem]{Proposition}
\newtheorem{assumptions}[theorem]{Assumptions}
\newtheorem{corollary}[theorem]{Corollary}
\newtheorem{remark}[theorem]{Remark}
\newtheorem{example}[theorem]{Example}
\newcommand{\limitlambda}{\lim_{\lambda>0,\, \lambda\rightarrow 0}}
\newcommand{\R}{\mathbb R}
\newcommand{\N}{\mathbb N}
\newcommand{\T}{\mathcal T}
\newcommand{\op}{\mathrm{op}}
\newcommand{\EE}{\mathbb E} 
\newcommand{\E}{\EE} 
\newcommand{\mean}{\EE} 
\newcommand{\Var}{\mathrm{Var}} 
\newcommand{\cond}[1]{\, #1 \vert \,} 
\newcommand{\norm}[1]{\left\lVert#1\right\rVert}
\newcommand{\abs}[1]{\left |#1\right|}
\newcommand{\<}{\langle}
\renewcommand{\>}{\rangle}
\newcommand{\Ac}{\mathcal A}
\newcommand{\Pbb}{\mathbb P}
\newcommand{\Lc}{\mathcal L}
\title{Monte Carlo on a single sample}
\author{Nils Detering, Nicole Hufnagel, Paul Kr\"uhner}
\date{\today}
\begin{document}
\maketitle
\begin{abstract}
    In this paper, we consider a Monte Carlo simulation method (MinMC) that approximates prices and risk measures for a range $\Gamma$ of model parameters at once. The simulation method that we study has recently gained popularity \cite{HugeSavine2020,RolfP-2025,BenthDeteringGalimbertiFlowForwards}, and we provide a theoretical framework and convergence rates for it. In particular, we show that sample-based approximations to $\E_{\theta}[X]$, where $\theta$ denotes the model and $\E_{\theta}$ the expectation with respect to the distribution $P_\theta$ of the model $\theta$, can be obtained across all $\theta \in \Gamma$ by minimizing a map $V:H\rightarrow \mathbb{R}$ with $H$ a suitable function space. Minimization can be achieved easily by fitting a standard feedforward neural network with stochastic gradient descent. We show that MinMC, which uses only one sample for each model, significantly outperforms a traditional Monte Carlo method performed for multiple values of $\theta$, which are subsequently interpolated. Our case study suggests that MinMC might serve as a new benchmark for parameter-dependent Monte Carlo simulations, which appear not only in quantitative finance but also in many other areas of scientific computing.
\end{abstract}

\section{Introduction}

We consider a set $\Gamma$ of risk neutral models for pricing options, where $(\Gamma,d_\Gamma)$ is a separable, metric space and $\mathcal B$ its Borel-$\sigma$-algebra. Under a fixed model $\theta\in\Gamma$, we denote the price process defined on some measurable space $(\Omega,\Ac)$ by $(S_t^\theta)_{t\geq0}$ and choose a payoff function $p$. Hence, the price of the European option with payoff function $p$ in the model $\theta$ is
\begin{align}
    \label{eq:def_C}
    C(\theta):=\mean [ p (S_T^{\theta}) ].
\end{align}
When no analytic expression for the price $C(\theta)$ is available in the model $\theta$, usually a Monte Carlo simulation is performed by simulating $M$ i.i.d.\ samples of $p(S_T^\theta)$ and calculating 
\begin{align}
\label{eq:MC_for_C}
    C(\theta)=\lim\limits_{M\to \infty} \frac{1}{M} \sum_{i=1}^M p\big(S_T^\theta(\omega_i)\big) .
\end{align}
This methodology is widely adopted (see for instance \cite{glasserman2004}), and according to the Central Limit Theorem (CLT), cf. \cite{pages2018}, the variance associated with the approximation error is $\text{Var} (p(S_T^\theta))/ M$. In this context, we fix $\theta$ and undertake a Monte Carlo simulation for the selected model. When considering an alternative model, such as a slight modification of a parameter, $M$ additional samples are required. For minor variations in $\theta$, it is anticipated that the price will not change significantly. However, with the new full Monte Carlo simulation, the information derived from our prior samples remains completely unused. Our objective is to implement this approach more efficiently with a procedure that uses as little as one sample for each model $\theta$ but makes use of the continuity of the map $\theta \mapsto C(\theta)$. We derive convergence rates for the approximation error for this procedure across the entire set of models $\Gamma$. We emphasize that the problem of calculating $C(\theta) $ for all $\theta \in \Gamma$ is particularly important for applications. The parameters of models used at a derivative trading desk are frequently changing, often necessitating Monte Carlo simulations to be rerun whenever there is a change in the price of the underlying assets. In addition, for calibration of a model to the market or the calculation of Greeks, one needs option prices for a range of model specifications. Therefore, a procedure capable of directly calculating the entire function $C : \Gamma \to \mathbb{R}$ carries a substantial advantage. Moreover, our more general setup introduced below is suitable for most parameter-dependent Monte Carlo simulations. It therefore applies not only to option pricing, but also to many risk management tasks in the banking and insurance business where it is often important to calculate expectations for a range of different model parameters to access model risk (see, for example, \cite{ContModelRisk,Detering2016,JokhadzeSchmidt2020,Crepey2024,LazarQiTunaru}).

For the approach to Monte Carlo simulation considered in this paper, we construct an extension $X:\Omega\times \Gamma \to \R$ for the payoff, where, for a given model $\theta\in \Gamma$, the distribution of $X(\cdot,\theta)$ is known (for example $X(\omega,\theta):=p(S_T^\theta(\omega))$ for the case of a European payoff mentioned above). Then, we specify a random variable $\Theta$ with distribution $\mu$ that characterizes the model. Let \( h: \Gamma \rightarrow \mathbb{R} \) be a measurable function such that all expectations below are finite. Given the particular specification of \( \Theta \) and the probability measure \( \mathbb{P} \) on \( (\Gamma \times \Omega, \mathcal{B} \otimes \mathcal{A}) \) described in Section~\ref{sec:mainresults}, the following calculation holds:

\begin{align}
\begin{split}
\label{eq:argmin_V}
    & V(h) := \mathbb{E} [ |X - h(\Theta)|^2 ] = \int_\Gamma\mathbb{E_{\theta}} [  \abs{X - h(\theta ) }^2   ]d \mu (\theta) \\
    &=\int_\Gamma \mathbb{E}_\theta [ X^2 ] -\mean_\theta[X]^2 d \mu (\theta) +  \int_\Gamma \big(\mean_\theta[X]^2-2\mathbb{E}_\theta [ X ] h(\theta ) +h(\theta )^2 \big) d \mu(\theta) \\
    &\stackrel{\eqref{eq:def_C}}{=}  \int_\Gamma  \Var_\theta [ X] d \mu  +  \int_\Gamma \big( \mathbb{E}_\theta [ X ] - h(\theta )  \big)^2  d \mu(\theta),
    \end{split}
\end{align}
where $\text{Var}_\theta$ and $\mathbb{E}_\theta$ are such that  $\E[X\cond{}\Theta] = (\E_{\theta}[X])|_{\theta=\Theta}$ and $\text{Var}[X\cond{} \Theta] =(\text{Var}_\theta [X])_{\theta=\Theta} $.
We notice that only the second term in \eqref{eq:argmin_V} actually depends on $h$. Thus, we can minimize the above expression $h\mapsto V(h)$ by choosing $h_0 (\theta ) = \mathbb{E}_\theta [ X ]$, which is the price of the option in the model $\theta$. This is basically a random field extension (see, for instance \cite{HugeSavine2020,Beck_2021}) of the well-known $\mathcal L^2$ approximation of expectation.

In summary, the aim is to find a function $h_0:\Gamma \to \R $ satisfying  
\begin{align*}
    h_0(\Theta)= \mean[X\cond{}\Theta]
\end{align*}
using a numerical method when the laws of $\Theta$ and the conditional law $P_\theta^X$ of $X$ given $\Theta=\theta$ are known but there are no analytic formulas available for calculating the conditional expectation. As in \eqref{eq:MC_for_C}, this could be achieved numerically via choosing $N$ positions $\Theta_1,\dots,\Theta_N$ and approximating $h(\Theta_i)$ with $M$ Monte Carlo simulations each. Based on the previous considerations, however, we proceed differently. We begin by approximating the function \( h \mapsto V(h) \) based on $N$ i.i.d.\ observations of $(X,\Theta)$. Afterwards, we compute the minimum $h_N$ of this approximation and finally prove that $h_N$ converges to the minimum $h_0$ of \( V \) for $N\rightarrow \infty$, provided a ridge term is added that prevents overfitting. The ridge term needs to converge to $0$, but at a rate that is not too fast. In summary, we choose a sample-dependent minimization and show that it converges to the minimum of $V$. The minimization step can, for example, be performed using a stochastic gradient descent algorithm that determines optimal weights for a neural network that approximates $h_N$. In the following, we often refer to this method as MinMC. For each fixed model $\theta$, in contrast to a classical Monte Carlo simulation, with the baseline MinMC at most one sample is actually generated ($M=1$). This is possible due to an explicit use of the continuity properties of the function $h_0:\Gamma \rightarrow \mathbb{R}$. Although our results cover the extreme case $M=1$, we show that it is often advisable to choose $M$ a bit larger, where in our case study, the optimal choice for $M$ ranges between $1$ and $100$ depending on the problem. Within our case study, we demonstrate that determining $\mean[X\cond{}\Theta]$ as the minimizer of a sample-based approximation of the mapping $h\mapsto V(h)$ significantly surpasses the performance of traditional Monte Carlo simulations conducted across multiple values of $\theta$, which are subsequently interpolated. Although our case studies focus primarily on option pricing, the methodology is general and applicable across various domains in scientific computing that involve Monte Carlo simulations which depend continuously on a parameter. 

To prove our convergence results, we understand the function $u_0$ as an element in a reproducing kernel Hilbert space $H$. This allows us to rewrite the sample-based approximation of $V$ plus a ridge term as a convex quadratic operator whose minimizer can be shown to be unique and explicitly expressed. Resorting to a series of operator-theoretic results allows us to obtain our convergence rates.

\subsection{Literature review}
The methodology for computing $\mean[X\cond{}\Theta]$ in the context of derivative pricing via the minimization of a functional analogous to $h\mapsto V(h)$ in a class of neural networks was initially proposed in \cite{HugeSavine2020} and compared with more classical regression techniques in \cite{RolfP-2025}. In \cite{HugeSavine2020} a heuristic convergence argument is provided, but without convergence rates. In \cite{Beck_2021} a similar method is used for solving the Kolmogorov partial differential equation using deep learning based on the Feynman-Kac formula. Notably, \cite{Beck_2021} demonstrates that the minimizer coincides with the solution of the targeted PDE, which is subsequently approximated using neural networks. In \cite{BenthDeteringGalimbertiFlowForwards} continuity results for the maps $h\mapsto V(h)$ and $\theta\mapsto h(\theta)$ have been established to adapt the method for flow forward pricing in commodity markets where the situation is further complicated by the infinite-dimensional nature of $\Gamma$. However, none of these papers addresses the question of an approximation with finitely many samples and its related convergence rates. This paper closes this gap in the literature by conducting a rigorous analysis and deriving convergence rates of MinMC within a broad framework, highlighting its applicability across diverse scenarios but with a particular focus on option pricing. Our case study suggests that MinMC might serve as a new benchmark for parameter-dependent Monte Carlo simulations that appear not only in quantitative finance but in many areas of scientific computing.

From a technical perspective, our work has some connection to the field of functional regression where the goal is to infer an unknown function \( f: \mathbb{R}^d \to \mathbb{R} \). In this context, \( f(\theta) \) is approximated based on samples \(\{(\Theta_i, X_i)\}_{i=1}^n\), where
\( \Theta_i \) represents an input and \( X_i \) is the observed response generated by the error model 
\begin{equation}\label{error:model}
 X_i = f(\Theta_i) + \varepsilon_i   ,
\end{equation}
    where \( \varepsilon_i \) is a noise term with \( \E[\varepsilon_i] = 0 \) and \( \text{Var}(\varepsilon_i) = \sigma^2 \).
The most common way for estimating $f:\Gamma \rightarrow \mathbb{R}$ is by means of {\em Parametric regression} (see, for example \cite{Draper1998,Montgomery2012}), where one assumes a fixed functional form (e.g., linear for linear regression) which has the advantage that the number of parameters to be estimated is usually low. However, it comes at the cost of a restricted shape for $f$. An alternative is {\em nonparametric estimation} (e.g., kernel regression, splines) where no specific structure is assumed (see \cite{Stone1977,Stone1980,Hardle1985} and the comprehensive manuscript \cite{Hardle1990}) and therefore more general functions $f$ can be estimated. Non-parametric regression techniques can be used for estimating conditional expectation when $f$ is given by $f(\Theta)= \mean[X\cond{}\Theta]$. More closely related to our approach is reproducing kernel Hilbert space (RKHS) regression. Kernel Hilbert space regression is regarded as a non-parametric regression approach wherein the selected RKHS imposes constraints on the regression function. This has recently been used for estimating the yield curve \cite{filipovic2022stripping}, for statistical estimation of conditional expectation and variance from time-series in \cite{filipovic2025jointestimationconditionalmean} and for uniform function estimation in \cite{dommel2021uniformfunctionestimatorsreproducing}. Qualitatively, there is a key difference between the regression problem and MinMC. In fact, for MinMC, the conditional distribution of $X$ given $\Theta_i$, is known and one can sample from it. The goal is then an efficient numerical technique to approximate $h_0(\Theta)= \mean[X\cond{}\Theta]$. The reproducing kernel Hilbert space approach comes in handy for MinMC as it imposes only minimal structure on the function $h_0$ and its approximating functions, and therefore in particular allows for approximations with neural networks and the stochastic gradient descent optimizer.

Our paper is organized as follows. In \Cref{sec:mainresults}, we summarize our main mathematical result \Cref{t:approximation} and provide some specific frameworks with examples. In \Cref{sec:proofs}, we provide a detailed proof of the main result as well as several helpful lemmas. \Cref{sec:case_study} contains two case studies. The first highlights our method on the Black-Scholes model where simple benchmarking is possible. The second is applied to the Heston model where no closed-form price formula is known. Additional technical results can be found in the appendix.

\section{Main results}
\label{sec:mainresults}
We consider a random variable $X:\Omega\rightarrow \mathbb R$, which specifies the payoff. Let $P$ be a probability transition kernel from $\Gamma$ to $\Omega$, that is, $P:\Gamma\times \mathcal A\rightarrow [0,1]$ with 
\begin{itemize}
    \item[(i)] $P_\theta$ is a probability measure on $(\Omega,\mathcal A)$ for any $\theta\in \Gamma$ and 
    \item[(ii)] $\theta\mapsto P_\theta(A)$ is measurable for any $A\in\mathcal A$. 
\end{itemize}

For a probability measure $\mu$ on $(\Gamma,\mathcal B)$ we define the probability measure $\Pbb$ on $(\Gamma\times \Omega, \mathcal B\otimes \mathcal A)$ via 
 $$ \Pbb(B\times A) := \int_B P_\theta(A) \mu(d\theta),\quad B\in\mathcal B,A\in\mathcal A.$$

We introduce on $(\Gamma\times \Omega, \mathcal B\otimes \mathcal A)$ the random variable $\Theta(\theta,\omega):=\theta$, with $\theta\in\Gamma$ and $\omega\in\Omega$, to describe the distribution of the possible models. We observe that its law under $\Pbb$ is given by $\Pbb^\Theta=\mu$.
Extending the random variable $X$ to the product space $\Gamma\times \Omega$ by $X(\theta,\omega):=X(\omega)$, we find that
 $$ \E[X\cond{}\Theta] = (\E_{\theta}[X])|_{\theta=\Theta} $$
if $X\geq 0$ or $\E[|X|]<\infty$ where $\E_{\theta}$ denotes expectation under $P_\theta$ and $\mean$ expectation under $\Pbb$. We denote the corresponding $\Lc^p$-spaces by
\begin{align*}
    &\Lc^p(\mu):=\Big\{ g:\Gamma\to \R \cond{}g\text{ measurable, } \|g\|_{\Lc^p(\mu)}:=\Big(\int_\Gamma |g(\theta)|^pd\mu(\theta)\Big)^{\frac1p}<\infty \Big\}, \\
    &\Lc^p(\Pbb,A):=\Big\{g:\Gamma\times \Omega\to A\cond{} g \text{ measurable, } \|g\|_{\Lc^p(\Pbb,A)}:=\big(\mean(|g|^p)\big)^{\frac1p}\Big\}.
\end{align*} 
For simplicity we omit the image space $A$ in the case of real functions. In case $X\in \Lc^2(\mathbb P)$, then by construction, it holds that $\text{Var}[X\cond{} \Theta] =(\text{Var}_\theta [X])_{\theta=\Theta} $ where $\text{Var}_\theta$ denotes the variance under $P_\theta$. In this setting, the motivating calculations presented in \eqref{eq:argmin_V} apply. Based on these considerations, our goal is to determine the argmin of $V$ over a suitable function space. 

First, we specify the function space. 
Let $H\subset C(\Gamma,\mathbb R)$ 
be a Hilbert space such that the point evaluation in $H$ is a continuous linear functional. We denote the skalar product of $H$ by $\langle \cdot, \cdot\rangle$ and its norm by $\|\cdot \|$. Due to Riesz representation theorem \cite[Thm. 12.5]{rudin1991functional}, there exists a function $k : \Gamma \times \Gamma \to \mathbb{R}$ such that $k_t:=k(t,\cdot)\in H$ and $\<h,k_t\>=h(t)$ for $h\in H$, $t\in \Gamma$. In other words, $H$ is a reproducing kernel Hilbert space, see \cite{manton2015rkhs,Fukumizu2011rkhs}. 

In this section, we present our main results and provide examples for the space $H$. Consider the i.i.d.\ sequence $\{(X_n,\Theta_n)\}_{n\in\N}$ where $(X_1,\Theta_1)$ has the same law as $(X,\Theta).$ We proceed under the following assumptions: 
\begin{assumptions}
\ \vspace{-4mm}
\begin{enumerate}[label=(A\arabic*), leftmargin=2cm]
        \item \label{ass:k:finite:secondmoment} The random variable $\theta\mapsto k(\theta,\theta)$ has finite second $\mu$-moment: $$\int_{\Gamma} \big(k(\theta,\theta)\big)^2 \mu(d\theta)<\infty.$$
        \item \label{ass:support:mu}The support of $\mu$ is $\Gamma$:
       $$ \Gamma = \bigcap\{ A\subseteq \Gamma: A\text{ closed, }\mu(A)=1\}.$$
       \item \label{ass:finite:fourth:moment}$X:\Gamma \times\Omega\rightarrow \mathbb R$ is a random variable which satisfies $\E[ X^4]<\infty$.
       \item \label{ass:minimizer:exists}There is $h_0\in H$ such that 
   $$ \E[X\cond{}\Theta] = h_0(\Theta). $$
    \end{enumerate}
\end{assumptions} 
\ref{ass:k:finite:secondmoment} guarantees $\E[h(\Theta)^4]<\infty$ for any function $h\in H$, see \Cref{l:norm of h(theta)}. The support property \ref{ass:support:mu} of $\Theta$ yields that any two elements of $H$ which are $\Pbb^\Theta$-a.s. equal are equal, see \Cref{l:as is equal}.

We consider the quadratic functions $V:H\rightarrow \mathbb R$ and $V_N:H\rightarrow \Lc^2(\Pbb)$ given by
\begin{align*}
    V(h) &\hspace{1mm}=\E[ (h(\Theta)-X)^2 ] \\
    V_N(h) &:=  \frac1N\sum_{i=1}^N (h(\Theta_i)-X_i)^2
\end{align*}
for any $h\in H$. In addition, we introduce regularised versions that include a ridge term; for $\lambda>0$ we define
 $$ V_\lambda(h) := V(h) + \lambda \|h\|^2,\quad V_{N,\lambda}(h):=V_N + \lambda  \|h\|^2. $$ 
 We note that the minimizer of $V$ is the function $h_0$, see \ref{ass:minimizer:exists}. According to our goal, we need to approximate $h_0$ from a sample $(X_1,\Theta_1),\dots,(X_N,\Theta_N)$. 
     For a given sample $(X_1,\Theta_1),\dots,(X_N,\Theta_N)$ the penalty $V_{N,\lambda}$ is explicitly specified via
     $$ V_{N,\lambda}(h) = \frac1N\sum_{i=1}^N (h(\Theta_i)-X_i)^2 + \lambda \|h\|^2. $$
    Optimizing this penalty can be achieved by large enough neural network, cf.\ universal approximation theorem in \cite{hornik1989}. The obtained network $g$ satisfies $g\approx h_{N,\lambda}$ and with the choice of $\lambda$ in Theorem \ref{t:approximation} below, we can then ensure that $g\approx h_0$.
 
 The proof of the following theorem, which establishes a bound on the convergence rate of MinMC, is given in \Cref{sec:proofs}.

\begin{theorem}\label{t:approximation}
    The minimizer $h_{N,\lambda}$ of $V_{N,\lambda}$ exists for any $N\in\N, \lambda >0$. There is a constant $C> 0$ such that
    \begin{equation}\label{main:conv:rate}
    \E[ \|h_{N,\lambda_N}-h_0\|_{\mathcal L^2(\mu)}] \leq C\max\big\{\lambda_N^{-2}N^{-\frac12},\sqrt{\lambda_N}\big\}, \quad N\in\mathbb N 
    \end{equation}
    for any $N\in\mathbb N$, $\lambda >0$.
\end{theorem}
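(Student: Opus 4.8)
The plan is to exploit the Hilbert-space structure: because $V_{N,\lambda}$ is a convex quadratic functional on $H$ with a strictly positive quadratic part coming from the ridge term $\lambda\|h\|^2$, the minimizer $h_{N,\lambda}$ exists, is unique, and can be written explicitly via a normal-equation argument. Concretely, writing $T_N h := \frac1N\sum_{i=1}^N h(\Theta_i)k_{\Theta_i}$ (the empirical counterpart of the integral operator $Th := \E[h(\Theta)k_\Theta] = \int_\Gamma h(\theta)k_\theta\,\mu(d\theta)$) and $Y_N := \frac1N\sum_{i=1}^N X_i k_{\Theta_i}$, the first-order condition for $V_{N,\lambda}$ reads $(T_N+\lambda I)h_{N,\lambda} = Y_N$, so $h_{N,\lambda} = (T_N+\lambda I)^{-1}Y_N$. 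The analogous population identity is $h_0 = (T+\lambda I)^{-1}(Th_0 + \lambda h_0)$, and one also has $Th_0 = \E[X k_\Theta]$ by \ref{ass:minimizer:exists}. The operators $T$ and $T_N$ are self-adjoint, positive, and (under \ref{ass:k:finite:secondmoment}) Hilbert–Schmidt, with $\E\|T_N - T\|_{\mathrm{HS}}^2 = O(1/N)$ and similarly $\E\|Y_N - Th_0\|^2 = O(1/N)$ using $\E[X^4]<\infty$ from \ref{ass:finite:fourth:moment}; these two moment bounds are the only probabilistic input.

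Next I would carry out the standard bias–variance decomposition. Insert the regularized population minimizer $h_\lambda := (T+\lambda I)^{-1}Th_0$ and split
\begin{align*}
\|h_{N,\lambda}-h_0\| \le \|h_{N,\lambda}-h_\lambda\| + \|h_\lambda - h_0\|.
\end{align*}
For the bias term $\|h_\lambda-h_0\| = \|\lambda(T+\lambda I)^{-1}h_0\| \le \sqrt{\lambda}\,\| (T+\lambda I)^{-1/2}\|\,\sqrt\lambda\|h_0\|$; more carefully, $\|h_\lambda - h_0\|^2 = \lambda^2\|(T+\lambda I)^{-1}h_0\|^2 \le \lambda\|h_0\|^2$, giving the $\sqrt{\lambda}$ contribution. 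For the variance term, write
\begin{align*}
h_{N,\lambda}-h_\lambda = (T_N+\lambda I)^{-1}\big(Y_N - Th_0\big) + \big[(T_N+\lambda I)^{-1}-(T+\lambda I)^{-1}\big]Th_0,
\end{align*}
and use the resolvent identity $(T_N+\lambda I)^{-1}-(T+\lambda I)^{-1} = (T_N+\lambda I)^{-1}(T-T_N)(T+\lambda I)^{-1}$ together with the crude bounds $\|(T_N+\lambda I)^{-1}\|\le \lambda^{-1}$, $\|(T+\lambda I)^{-1}\|\le\lambda^{-1}$, and $\|(T+\lambda I)^{-1}Th_0\|\le\|h_0\|$. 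This yields $\|h_{N,\lambda}-h_\lambda\| \lesssim \lambda^{-1}\|Y_N-Th_0\| + \lambda^{-2}\|T_N-T\|\,\|h_0\|$, and taking expectations with the $O(1/N)$ moment bounds above gives $\E\|h_{N,\lambda}-h_\lambda\| \lesssim \lambda^{-2}N^{-1/2}$ (the $\lambda^{-1}$ term being dominated by the $\lambda^{-2}$ term for small $\lambda$). Combining with the bias estimate and finally passing from $\|\cdot\|$ to $\|\cdot\|_{\mathcal L^2(\mu)}$ — which is controlled by $\|\cdot\|$ since $\|h\|_{\mathcal L^2(\mu)}^2 = \langle Th,h\rangle \le \|T\|\,\|h\|^2$ — produces \eqref{main:conv:rate} with the $\max$ of the two rates.

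The main obstacle I anticipate is not any single inequality but getting the operator bookkeeping exactly right, in particular justifying that the first-order condition $(T_N+\lambda I)h_{N,\lambda}=Y_N$ genuinely characterizes the minimizer in the infinite-dimensional RKHS $H$ (one must check $V_{N,\lambda}$ is coercive and weakly lower semicontinuous, or argue directly that the unique critical point is the global minimum by strict convexity), and making sure the expectations $\E\|T_N-T\|_{\mathrm{HS}}^2$ and $\E\|Y_N-Th_0\|^2$ are finite and $O(1/N)$ — this is where \ref{ass:k:finite:secondmoment} (controlling $\E[k(\Theta,\Theta)^2]$, hence $\E\|k_\Theta\|^4$) and \ref{ass:finite:fourth:moment} (controlling $\E[X^4]$, hence via Cauchy–Schwarz $\E[X^2\|k_\Theta\|^2]$) are used crucially. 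I would isolate these two moment bounds as preliminary lemmas (they may well be among the "helpful lemmas" already promised in \Cref{sec:proofs}), after which the rest is the deterministic resolvent calculus sketched above. Note also that a suboptimal power of $\lambda^{-1}$ is fine here: the theorem only claims $\lambda_N^{-2}$, so the crude resolvent bounds suffice and no spectral/source-condition refinement is needed.
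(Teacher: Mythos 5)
Your overall architecture is the same as the paper's: write $V_{N,\lambda}$ as a quadratic form with empirical operator $T_N$ (the paper's $Q_N=\frac1N\sum k_{\Theta_i}\otimes k_{\Theta_i}$) and vector $Y_N$ (the paper's $a_N$), solve the normal equation $h_{N,\lambda}=(T_N+\lambda)^{-1}Y_N$, split off the regularized population minimizer $h_\lambda=(T+\lambda)^{-1}Th_0$, control the variance term by the resolvent identity plus $\|(T_N+\lambda)^{-1}\|\le\lambda^{-1}$ and the two $O(1/N)$ Monte Carlo moment bounds (which are indeed the paper's Lemma~\ref{l:MC estimate}, resting on \ref{ass:k:finite:secondmoment} and \ref{ass:finite:fourth:moment} exactly as you say), and finally pass to $\|\cdot\|_{\mathcal L^2(\mu)}$ via $\|h\|_{\mathcal L^2(\mu)}^2=\langle Th,h\rangle\le\|T\|_{\op}\|h\|^2$. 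All of that is correct and matches Proposition~\ref{p:random approximation} and the proof of the theorem.

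There is, however, one genuine error: your bias estimate. You claim $\|h_\lambda-h_0\|^2=\lambda^2\|(T+\lambda I)^{-1}h_0\|^2\le\lambda\|h_0\|^2$ \emph{in the $H$-norm}, and then plan to convert to $\mathcal L^2(\mu)$ at the very end. This inequality is false: the only uniform bound available is $\|\lambda(T+\lambda)^{-1}\|_{\op}\le 1$, and if $h_0$ has mass on eigenvectors of $T$ with eigenvalue $\mu_j\ll\lambda$, then $\|\lambda(T+\lambda)^{-1}h_0\|\approx\|h_0\|$, not $\sqrt\lambda\|h_0\|$. Without a source condition on $h_0$ the $H$-norm bias tends to $0$ (strongly, by compactness of $T$ and triviality of its kernel) but with no rate, so your route of ``estimate everything in $H$, convert at the end'' cannot produce the $\sqrt{\lambda_N}$ term in \eqref{main:conv:rate}. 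The fix is to measure the bias directly in $\mathcal L^2(\mu)$, using the identity $\|h\|_{\mathcal L^2(\mu)}=\|T^{1/2}h\|$ that you already wrote down: then
\begin{equation*}
\|h_\lambda-h_0\|_{\mathcal L^2(\mu)}=\big\|T^{1/2}\lambda(T+\lambda)^{-1}h_0\big\|\le\Big(\sup_{x\ge0}\tfrac{\lambda\sqrt x}{x+\lambda}\Big)\|h_0\|\le\tfrac{\sqrt\lambda}{2}\|h_0\|,
\end{equation*}
by the spectral mapping theorem and the elementary bound $\lambda\sqrt x/(x+\lambda)\le\sqrt\lambda/2$. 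This is precisely the content of the paper's Lemma~\ref{l:f estimate} and Propositions~\ref{p:no ridge estimate} and~\ref{p:h to h0}; with this replacement (bias in $\mathcal L^2(\mu)$, variance in $H$ then converted by $\|Q\|_{\op}^{1/2}$), your argument goes through and coincides with the paper's proof.
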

\begin{remark}
\label{rem:convergencerate}
    For the application of \Cref{t:approximation} we assume that $\lambda_N\to 0$ and $\lambda_NN^{\frac14}\to \infty$ to ensure $\lim\limits_{n\to\infty}\E[ \|h_{N,\lambda_N}-h_0\|_{\mathcal L^2(\mu)}]=0$. Since $\lambda_N^{-2}N^{-\frac12}$ decreases monotonically and $\sqrt{\lambda_N}$ increases monotonically in $\lambda_N$, the optimal rate in \eqref{main:conv:rate} is obtained when $ \lambda_N^{-2}N^{-\frac12}=\sqrt{\lambda_N}$. Hence, the optimal rate for the right hand side is $N^{-\frac1{10}} $ when $\lambda_N=N^{-\frac15}$. Compared to the classical Monte Carlo rate of $N^{-1/2}$, this rate may not look impressive. However, it should be noted that the approximation is applied throughout the model class $\Gamma$ instead of only for one model $\theta \in \Gamma$. Moreover, the rate is independent of the dimension of $\Gamma$. Our case study in Section \ref{sec:case_study} shows a superior performance of MinMC even when compared on a finite deterministic subset $\theta_1, \dots , \theta_K$ with classical Monte Carlo simulations performed exactly for the models $\theta_1, \dots , \theta_K$. Like all Monte Carlo methods, a larger sample yields a better theoretical approximation. In addition, a larger network and longer training should give a better approximation $g$ of the approximator $h_{N,\lambda}$. 
\end{remark}

For a neural network $g$ with $g\approx h_{N,\lambda}$, it is important to also understand the mse-loss of the network. The next proposition shows that the mse-loss is approximately equal to the expected conditional variance of $X$. This is despite the fact that we already know that $h_{N,\lambda}\rightarrow h_0$ for $N\rightarrow \infty$ and $\lambda \rightarrow 0$.

 The proof of the following proposition is given in \Cref{sec:proofs}.
\begin{proposition}\label{p:loss}
    The minimizer $h_{N,\lambda}$ of $V_{N,\lambda}$ exists for any $N\in\mathbb N$, $\lambda>0$. For $\lambda_N\to 0$ and $\lambda_NN^{\frac14}\to \infty$ the following convergence holds:
  $$ \lim\limits_{N\to \infty}\frac{1}{N}\sum_{i=1}^N (h_{N,\lambda_N}(\Theta_i)-X_i)^2 = \E[\Var(X\cond{}\Theta)]$$
  as $N\rightarrow \infty$ in $\Lc^1(\Pbb)$.
\end{proposition}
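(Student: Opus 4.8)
The plan is to split $V_N(h_{N,\lambda_N})=V_N(h_0)+\big(V_N(h_{N,\lambda_N})-V_N(h_0)\big)$ and show that $V_N(h_0)\to\E[\Var(X\cond{}\Theta)]$ while the second, ``overfitting'', term vanishes, both in $\Lc^1(\Pbb)$. For the first term, \ref{ass:minimizer:exists} gives $h_0(\Theta)=\E[X\cond{}\Theta]$, hence $\E[(h_0(\Theta)-X)^2]=\E\big[\E[(X-\E[X\cond{}\Theta])^2\cond{}\Theta]\big]=\E[\Var(X\cond{}\Theta)]$; moreover $\{(h_0(\Theta_i)-X_i)^2\}_{i\in\N}$ is i.i.d.\ and lies in $\Lc^2(\Pbb)$ by \ref{ass:finite:fourth:moment} and $\E[h_0(\Theta)^4]<\infty$ (the latter holds since $h_0\in H$, by \ref{ass:k:finite:secondmoment} and \Cref{l:norm of h(theta)}), so by the law of large numbers $V_N(h_0)=\frac1N\sum_{i=1}^N(h_0(\Theta_i)-X_i)^2\to\E[\Var(X\cond{}\Theta)]$ in $\Lc^2(\Pbb)$, a fortiori in $\Lc^1(\Pbb)$.

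For the overfitting term I would use that $h_{N,\lambda}$, being the unique minimizer of the strictly convex differentiable functional $V_{N,\lambda}$ (existence as in \Cref{t:approximation}), is characterized by the first-order condition $\frac1N\sum_{i=1}^N(h_{N,\lambda}(\Theta_i)-X_i)\,g(\Theta_i)+\lambda\<h_{N,\lambda},g\>=0$ for all $g\in H$. Writing $\Delta_N:=h_{N,\lambda}-h_0\in H$, applying the condition with $g=\Delta_N$, and expanding $V_N(h_{N,\lambda})=\frac1N\sum_{i=1}^N\big(\Delta_N(\Theta_i)+(h_0(\Theta_i)-X_i)\big)^2$, a short computation gives
\begin{align*}
 V_N(h_{N,\lambda})=V_N(h_0)-\frac1N\sum_{i=1}^N\Delta_N(\Theta_i)^2-2\lambda\<h_{N,\lambda},\Delta_N\>,
\end{align*}
so it suffices to prove $\E\big[\frac1N\sum_{i=1}^N\Delta_N(\Theta_i)^2\big]\to0$ and $\lambda_N\,\E\big[\,|\<h_{N,\lambda_N},\Delta_N\>|\,\big]\to0$ as $N\to\infty$.

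For this last point I would fall back on the operator representation used in the proof of \Cref{t:approximation}: with $T_N:=\frac1N\sum_{i=1}^N k_{\Theta_i}\otimes k_{\Theta_i}$ and $g_N:=\frac1N\sum_{i=1}^N X_i k_{\Theta_i}$ one has $h_{N,\lambda}=(T_N+\lambda)^{-1}g_N$, and splitting $g_N=T_Nh_0+\Xi_N$ with $\Xi_N:=\frac1N\sum_{i=1}^N(X_i-h_0(\Theta_i))k_{\Theta_i}$ gives $\Delta_N=-\lambda(T_N+\lambda)^{-1}h_0+(T_N+\lambda)^{-1}\Xi_N$. Since $\E[X_i-h_0(\Theta_i)\cond{}\Theta_i]=0$ by \ref{ass:minimizer:exists}, the off-diagonal terms drop and, by Cauchy--Schwarz together with \ref{ass:k:finite:secondmoment}, \ref{ass:finite:fourth:moment} and $\E[h_0(\Theta)^4]<\infty$,
\begin{align*}
 \E\|\Xi_N\|^2=\frac1N\,\E\big[(X_1-h_0(\Theta_1))^2\,k(\Theta_1,\Theta_1)\big]\le\frac cN .
\end{align*}
Using $\frac1N\sum_{i=1}^N\Delta_N(\Theta_i)^2=\|T_N^{1/2}\Delta_N\|^2$ and the elementary spectral estimates $\|T_N^{1/2}(T_N+\lambda)^{-1}\|_{\op}\le\frac1{2\sqrt\lambda}$, $\|\lambda^{1/2}T_N^{1/2}(T_N+\lambda)^{-1}\|_{\op}\le\frac12$, $\|(T_N+\lambda)^{-1}T_N\|_{\op}\le1$ and $\|\lambda(T_N+\lambda)^{-1}\|_{\op}\le1$, one gets
\begin{align*}
 \frac1N\sum_{i=1}^N\Delta_N(\Theta_i)^2\le\frac{\lambda}{2}\|h_0\|^2+\frac{1}{2\lambda}\|\Xi_N\|^2,\qquad \lambda\,|\<h_{N,\lambda},\Delta_N\>|\le\lambda\|h_0\|^2+2\|h_0\|\,\|\Xi_N\|+\frac1\lambda\|\Xi_N\|^2 .
\end{align*}
Taking expectations, inserting $\E\|\Xi_N\|^2\le c/N$ and $\E\|\Xi_N\|\le\sqrt{c/N}$, and using $\lambda_N\to0$ and $\lambda_N N^{1/4}\to\infty$ (which forces $\lambda_N N\to\infty$, hence $(\lambda_N N)^{-1}\to0$) makes both right-hand sides tend to $0$. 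Combined with the first paragraph, $\E\big[\,|V_N(h_{N,\lambda_N})-\E[\Var(X\cond{}\Theta)]|\,\big]\to0$, which is the claim.

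The step I expect to be the main obstacle is controlling $\frac1N\sum_{i=1}^N\Delta_N(\Theta_i)^2$, the \emph{empirical} $\Lc^2$-error of the fitted function: since $\Delta_N=h_{N,\lambda_N}-h_0$ depends on the whole sample $(\Theta_1,\dots,\Theta_N)$, a plain law of large numbers does not apply, and the $\Lc^2(\mu)$-convergence supplied by \Cref{t:approximation} is with respect to the \emph{population} measure $\mu$ rather than the empirical one. Handling it is precisely what forces the spectral/operator decomposition of the ridge estimator above, and it is also where the growth restriction on $\lambda_N$ (not merely $\lambda_N\to0$) is needed.
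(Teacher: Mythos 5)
Your argument is correct, but it follows a genuinely different route from the paper's. The paper's proof is essentially three lines: by \Cref{l:minimizer} the optimal value of the quadratic $V_{N,\lambda}$ has the closed form $V_{N,\lambda}(h_{N,\lambda}) = \frac1N\sum_{i=1}^N X_i^2 - \<a_N,h_{N,\lambda}\>$, and one then passes to the limit termwise using $\frac1N\sum_i X_i^2\to\E[X^2]$, $a_N\to a$ in $\Lc^2(\Pbb,H)$ (\Cref{l:MC estimate}), and the already-established convergence $h_{N,\lambda_N}\to h_0$ in $\Lc^1(\Pbb,H)$ from \Cref{p:random approximation,p:h to h0} --- which is the only place the condition $\lambda_N N^{1/4}\to\infty$ enters --- before identifying $V(h_0)=\E[X^2]-\<a,h_0\>=\E[\Var(X\cond{}\Theta)]$. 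You instead perform a bias--variance decomposition of the \emph{empirical} risk around $h_0$, using the first-order optimality condition to obtain $V_N(h_{N,\lambda})=V_N(h_0)-\frac1N\sum_i\Delta_N(\Theta_i)^2-2\lambda\<h_{N,\lambda},\Delta_N\>$, and then control the correction terms by decomposing the ridge estimator as $\Delta_N=-\lambda(Q_N+\lambda)^{-1}h_0+(Q_N+\lambda)^{-1}\Xi_N$ with the martingale-type remainder $\Xi_N$ satisfying $\E\|\Xi_N\|^2\le c/N$. This is longer and re-derives estimates the paper gets for free from its earlier propositions, but it is self-contained and buys something: your argument only needs $\lambda_N N\to\infty$ (so that $\lambda_N^{-1}\E\|\Xi_N\|^2\to0$), which is strictly weaker than the hypothesis $\lambda_N N^{1/4}\to\infty$ that the paper's proof consumes through \Cref{p:random approximation}. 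Your spectral bounds ($\|\lambda Q_N^{1/2}(Q_N+\lambda)^{-1}\|_{\op}\le\sqrt{\lambda}/2$, etc.) are exactly the estimates of \Cref{l:f estimate} and \Cref{p:no ridge estimate} applied to the empirical operator $Q_N$ in place of $Q$, which is legitimate since those lemmas only use positive semidefiniteness; the constants in your bound for $\lambda|\<h_{N,\lambda},\Delta_N\>|$ are slightly off, but the form and the conclusion are right.
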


In the following, we provide examples that align with our framework. In each case, we begin by introducing our set of models $\Gamma$, a subset of $\R^n$, along with the corresponding Hilbert space $H$ and the function $k$ that satisfies all the required properties. Then, we present an example for the payoff $X$ that satisfies \ref{ass:finite:fourth:moment} and \ref{ass:minimizer:exists}. Our last Example \ref{exm:featuremap}, which uses feature maps to map the problem into a simple convenient $\mathcal{L}^2$ space, provides a very general framework that also turns out very efficient from a computational perspective, as we will see in our case study Section \ref{sec:case_study}. 

First, we focus on translation invariant $k$, that is, there exists a function $\psi$ such that $k(x,y)=\psi(x-y)$, see \cite[p. 2393]{Fukumizu2011rkhs}. 
\begin{lemma}
\label{lem:translationinvariant_kernel}
    Let $\Gamma \subset \R^n$. If there exists a  continuous, positive-definite and symmetric function $\psi:\R^n\to \R$ such that $k(x,y)=\psi(x-y)$ then $k$ 
    satisfies \ref{ass:k:finite:secondmoment} for any measure $\mu$ on $\Gamma$. Furthermore, there is a Hilbert space
     $$ H\subseteq C(\Gamma,\mathbb R)$$
     with some scalar product such that 
     \begin{itemize}
         \item[(i)] $k_x\in H$ for any $x\in\Gamma$ and
         \item[(ii)] $\<f,k_x\>=f(x)$ for any $x\in H$.
     \end{itemize}
\end{lemma}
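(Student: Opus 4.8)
The plan is to handle the two claims separately: \ref{ass:k:finite:secondmoment} is essentially automatic because $\theta\mapsto k(\theta,\theta)$ is constant, whereas the Hilbert space $H$ is just the reproducing kernel Hilbert space generated by $k$ (Moore--Aronszajn), with the continuity of $\psi$ used only to force its elements to be continuous functions.

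For \ref{ass:k:finite:secondmoment} I would observe that $k(\theta,\theta)=\psi(\theta-\theta)=\psi(0)$ for every $\theta\in\Gamma$, so $\theta\mapsto (k(\theta,\theta))^2$ is the finite constant $\psi(0)^2$; since $\mu$ is a probability measure, $\int_\Gamma (k(\theta,\theta))^2\,\mu(d\theta)=\psi(0)^2<\infty$ irrespective of $\mu$. (Positive-definiteness of $\psi$ also yields $0\le\psi(0)$ and $|\psi(x)|\le\psi(0)$ for all $x$, which I will use below.)

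To construct $H$, note first that symmetry and positive-definiteness of $\psi$ make $k(x,y)=\psi(x-y)$ a symmetric, positive semi-definite kernel on $\Gamma\times\Gamma$. I would set $H_0:=\Span\{k_x:x\in\Gamma\}$ and equip it with the bilinear form generated by $\langle k_x,k_y\rangle:=k(x,y)$. Positive semi-definiteness together with the Cauchy--Schwarz inequality (which holds for semi-definite forms) gives, for $h\in H_0$ with $\langle h,h\rangle=0$, that $|h(x)|=|\langle h,k_x\rangle|\le\|h\|\,\|k_x\|=0$ for all $x$, hence $h=0$; so the form is a genuine inner product on $H_0$ and the reproducing identity $\langle h,k_x\rangle=h(x)$ holds on $H_0$ by construction. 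I then let $H$ be the completion of $H_0$, embedded into functions on $\Gamma$ through $h\mapsto\big(x\mapsto\langle h,k_x\rangle\big)$; this makes (i) and (ii) hold and each point evaluation continuous with $\|k_x\|=\sqrt{\psi(0)}$.

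The only step with real content is showing $H\subseteq C(\Gamma,\R)$, and here translation invariance does the work: $\|k_x-k_y\|^2=k(x,x)-2k(x,y)+k(y,y)=2\big(\psi(0)-\psi(x-y)\big)$, so continuity of $\psi$ at the origin gives $\|k_x-k_y\|\to 0$ as $|x-y|\to 0$, and therefore $|h(x)-h(y)|=|\langle h,k_x-k_y\rangle|\le\|h\|\,\|k_x-k_y\|\to 0$ for every $h\in H$. The same density argument shows the embedding is injective (if $h(x)=\langle h,k_x\rangle=0$ for all $x$ then $h\perp H_0$, and $H_0$ is dense in $H$, so $h=0$), so $H$ is genuinely a space of continuous functions on $\Gamma$ with reproducing kernel $k$. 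I expect no serious obstacle; the only point to be careful about is the passage to the completion and the identification of its abstract elements with continuous functions on $\Gamma$, which is exactly what the estimate on $\|k_x-k_y\|$ delivers. (Should one prefer a more explicit route, Bochner's theorem lets one write $\psi(x)=\int_{\R^n}e^{i\langle x,\omega\rangle}\,\nu(d\omega)$ for a finite nonnegative measure $\nu$ and realise $H$ concretely inside $L^2(\nu;\C)$, but the completion argument is shorter.)
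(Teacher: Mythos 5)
Your proof is correct and takes essentially the same route as the paper: the first claim is immediate from $k(\theta,\theta)=\psi(0)$ being constant (so the integral is $\psi(0)^2$ for a probability measure $\mu$), and the second claim is exactly the Moore--Aronszajn theorem, which the paper simply cites while you reprove it via completion of $\Span\{k_x:x\in\Gamma\}$ together with the estimate $\|k_x-k_y\|^2=2\big(\psi(0)-\psi(x-y)\big)$ to guarantee $H\subseteq C(\Gamma,\R)$.
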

\begin{proof}
    We have $k(\theta,\theta)=\psi(0)$ which guarantees \ref{ass:k:finite:secondmoment} for any measure $\mu$ on $\Gamma$, that is, finite second $\mu$-moment. The function $k$ receives the properties continuous, positive-definite and symmetric from $\psi$. Hence, the existence of a Hilbert space satisfying (i) and (ii) follows by the Moore-Aronszajn Theorem \cite[p. 344]{aronszajn1950theory}. 
\end{proof}

\begin{example}
\label{exm:[0,1]}
    We consider the Hilbert space 
    \begin{align*}
        H&:=\big\{f:[0,1]\rightarrow \mathbb R\cond{} f\text{ absolutely continuous with }f'\in \mathcal L^2(\mathcal U([0,1])\big\}\\
        &\hspace{1mm}
        \subseteq C([0,1],\R)
    \end{align*}
    with scalar product
     $$ \<f,g\> := \frac12\big(f(1)+f(0)\big)\big(g(1)+g(0)\big) + \frac12\int_0^1 f'(x)g'(x)dx. $$
     Here, $\Gamma:=[0,1]$ specifies the models for pricing options with a continuous uniform distribution $\mu:=\mathcal U([0,1])$. For $x\in[0,1]$ the function $k_x: [0,1] \to \R$ given via $k_x(y):= 1 - |x-y|$ fulfills $k_x\in H$ and
      $$ \<k_x,f\> = f(x)$$
      for $f\in H$. Consequently,
     $$ k(x,y) := k_x(y) = 1 - |x-y|$$
    satisfies the desired properties. 
    
    Assumption \ref{ass:k:finite:secondmoment} holds by \Cref{lem:translationinvariant_kernel} since $k$ is a translation-invariant function. 
    The support property \ref{ass:support:mu} is valid due to the choice of $\mu$. Apparently, we can choose a different $\mu$ such that \ref{ass:support:mu} remains fulfilled. Furthermore, we ensure $\mean[X^4]\leq C<\infty$ and that $h_0(\theta)=\mean_\theta[X]$ is Lipschitz continuous which implies \ref{ass:finite:fourth:moment} and \ref{ass:minimizer:exists}.

    A classic and well-known example that fits into this framework is the Black-Scholes model with volatility ranging for example between $10\%-20\%$, see \cite{BlackScholes1973}. We select the volatility as $(1+\theta)\cdot 0.1$ for $\theta\in[0,1]$. In this case, the underlying asset price follows a geometric Brownian motion with volatility $\theta$, initial price $x>0$, and a risk-free interest rate $r>0$. The payoff of a European call option is then given by:
    \begin{align*}
        X
    &:= \Big(xe^{Z_T}-K\Big)_+:=\max\Big\{xe^{Z_T}-K,0\Big\},
    \end{align*}
    where $K>0$ is the strike price, $T>0$ is the maturity, and 
    \begin{align*}
        Z_T&\sim \mathcal{N}\left( -\frac{1}{200}(1 + \theta)^2 T, \frac{(1 + \theta)^2}{100} T \right) \text{ under } P_\theta.
    \end{align*} 
    Since the norm of $X$ is bounded by a geometric Brownian motion, it immediately follows $\mean[X^4]<\infty$. Here, the price formula of the option is known, \cite[Eq. (13)]{BlackScholes1973},
    \begin{align*}
        h_0(\Theta)&=\mean[X\cond{}\Theta] \\
        &=x  \Phi\left(d_1\left(\frac{1+\Theta}{10}\right)\right) - K  e^{-rT} \Phi\left(d_1\left(\frac{1+\Theta}{10}\right)-\frac{(1+\Theta)\sqrt T}{10}\right)
    \end{align*}
    with $\Phi$ the cumulative distribution function of $\mathcal N(0,1)$ and 
    \begin{align*}
			d_1(\sigma) ~&:=~ \frac{\ln \left(\frac{x}{K}\right) + \left(r+\frac1{2}\sigma^2\right) T}{\sigma \sqrt{T-t}}.
    \end{align*}
    The function $h_0'$ is continuous and its absolute value is bounded on $[0,1]$, hence $h_0$ is Lipschitz continuous. 

    Obviously, this setting works for any volatility interval $[a,b] \subset [0,\infty)$ by choosing the volatility $a+\theta(b-a)$ for $\theta\in[0,1]$. In addition, we can choose a more general European or even path dependent payoff.
\end{example}

\begin{example}
    We consider the Sobolov space 
    $$H^\alpha(\R^n):=\left\{f\in \mathcal L^2(\lambda^n)\cond{\big} \int_{\R^n} (1+|x|^\alpha) |\widehat{f}(x)|^2d \lambda^n(x)<\infty\right\}$$
    with scalar product 
    $$ \langle f,g\rangle_\alpha:=(2\pi)^{-\frac n2}\int_{\R^n} (1+|x|^\alpha) \widehat{f}(x)\overline{\widehat{ g}}(x) d \lambda^n(x)$$
    where $\widehat f$ represents the Fourier transform for integrable $f$, that is, 
    $$\widehat f(x) := (2\pi)^{-\frac n2}\int_{\R^n} f(y) e^{i\langle x,y\rangle} d\lambda^n(y)$$
    and $\lambda^n$ the Lebesgue measure on $\R^n$.

    The function $k_x:\R^n\to \R$ is given via 
    \begin{align*}
        \widehat k_x(u):=\frac{e^{i\langle u,x\rangle}}{1+|u|^\alpha}.
    \end{align*}
    For $\alpha>\frac n2$, $x\in\mathbb R^n$ it holds that $\widehat k_x\in \mathcal L^2(\lambda^n)$ and one finds $k_x\in H_\alpha$ and for $f\in H_\alpha$ that
     $$ \< f, k_x \>_{\alpha} = f(x). $$
    Due to Sobolev embedding theorem \cite[Ch. 6]{adams2003sobolev},  $H^\alpha$ contains $C^\alpha(\R^n,\R)$ with compact support 
    for $\alpha\geq\frac n2$. 

    If $n=1$ and $\alpha=2$, then we find
     $$ k_x(y) = c e^{-|x-y|}$$
    for some constant $c>0$ because $u\mapsto \frac 1{\pi(1+|u|^2)}$ is the density of the Cauchy distribution.

    The function $k$ is again translation-invariant, hence \ref{ass:k:finite:secondmoment} holds, see \Cref{lem:translationinvariant_kernel}. We then have to choose a measure $\mu$ with support $\R^n$, i.e., \ref{ass:support:mu} and a random variable $X$ such that \ref{ass:finite:fourth:moment} and \ref{ass:minimizer:exists} apply. 
\end{example}
\begin{example} The space presented in Filipovi{\'{c}} \cite{filipovic} is another convenient RKHS. For a continuously differentiable and non-decreasing function $w : [0,\infty)\to [1,\infty)$ with $w(0)=1$. Let now $H$ be the set of all absolutely continuous functions $f: [0,\infty) \to \mathbb{R}$ for which
\begin{equation}
	\label{filipoviccond}
	\int_{0}^\infty \big(f'(x)\big)^2w(x)dx < \infty .
\end{equation}
With the inner product defined by
\begin{equation*}
	\langle f,g\rangle := f(0)g(0)+\int_{0}^\infty f'(x)g'(x)w(x)dx, \qquad \mbox{for } f, g\in H_w,
\end{equation*}
and $\norm{f}^2:= \langle f,f\rangle$, this space is a separable Hilbert space with $H_w\subset C([0,\infty),\R)$, see \cite[Theorem 5.1.1]{filipovic}. Moreover, $ f(x)=\langle f,k_x \rangle$ and $k_x\in H_w$ with $k_x$ defined by $$k(x,y):=k_x(y):=1+\int_0^{y \wedge x} \big(w(z)\big)^{-1} dz.$$ 

In this setting, we have $\Gamma:=[0,\infty)$ with some probability measure $\mu$ with support $\Gamma$, i.e., \ref{ass:support:mu}. If $w^{-1}$ is chosen to be integrable, then $k_x(y)\leq C<\infty$ and \ref{ass:k:finite:secondmoment} is true. Finally, we choose a payoff $X$ such that \ref{ass:finite:fourth:moment} and \ref{ass:minimizer:exists} are valid.

Similar to Example \ref{exm:[0,1]} the Black-Scholes model with volatility $\theta\in[0,\infty)$ fits into this setting. Using the same notation, the payoff is
\begin{align*}
    X:=\Big(xe^{Z_T}-K\Big)_+
\end{align*}
with 
\begin{align*}
    Z_T\sim \mathcal{N}\left(-\frac{1}{2} \theta^2 T,\; \theta^2 T\right) \text{ under } P_\theta
\end{align*}
and corresponding price formula 
\begin{align*}
    h_0(\Theta)=\mean[X\cond{}\Theta] =x  \Phi\big(d_1(\Theta)\big) - K  e^{-rT} \Phi\left(d_1(\Theta)-\Theta\sqrt T\right).
\end{align*}
For $w(x):=e^x$ we have $h_0\in H_w$, and $\mean[X^4]<\infty$ as $X$ is bounded by a geometric Brownian motion. For example, choosing $\mu$ as the exponential distribution, this example fulfills all assumptions. 
\end{example}

\begin{example}
\label{exm:featuremap}
   A feature map is any function 
   $$ \phi:\Gamma \rightarrow H, $$
   where $\Gamma$ is some set and $H$ any Hilbert space. 
   
   In the following, we use the specific Hilbert space $H$ given by $H=\mathcal L^2(\mathbb R^d,R)$ for some chosen probability measure $R$. For $x\in\Gamma$ we write $\phi_x$ for the image of $x$ under $\phi$ and treat it as a random variable, that is, we consider $\E_R[XY]$ for the scalar product of $X,Y\in H$. Let $\sigma:\mathbb R\rightarrow \mathbb R$ be measurable and bounded. We define 
    $$ \phi_x:=\sigma(x+\cdot) \in H$$
    for any $x\in\mathbb R^d$ where $\sigma$ is applied component-wise. Then, $\phi$ is a feature map. Recall that $k(x,y):=\E_R[\phi_x\phi_y]$ for $x,y\in \Gamma$ defines a kernel on $\Gamma$, cf.\ \cite{aronszajn1950theory}. For $X\in H$ we define $f_X:\Gamma\rightarrow\mathbb R,x\mapsto \E_R[X\phi_x]$ and note that 
 $$ \Phi:H\rightarrow H_k, X\mapsto f_X $$
  is a surjective, continuous, linear map from $H$ to $H_k$.
  
    Observe that
   $$ \vert f\vert^2_{H_k} = \inf_{X\in H,f_X=f} \E_R[X^2].$$

   Further, we consider a generic quadratic function on $H_k$
    $$ q(f):=\<Qf,f\>_{H_k}-2\<a,f\>_{H_k}+c$$
    where $Q$ is positive-semidefinite on $H_k$, $a\in H_k$ and $c\in\mathbb R$, and assume that $f_0$ is a minimizer for $q$.

   We define the quadratic function
    $$ \bar q(X):=q(\Phi(X)), \quad X\in H.$$
   Note that $\bar q$ has a minimizer and any minimizer $X_0$ of $\bar q$ satisfies $\Phi(X_0)=f_0$. This allows to pull the minimization problem of $q$ in $H_k$ to $\bar q$ in $H$. The latter has a known norm which might be easier implemented in some cases. 

   Additionally, if the function $f\mapsto q(f)+\lambda \|f\|_{H_k}^\alpha$ is to be minimized over $f$ for some fixed $\alpha\geq 1$, then the unique minimizer $X_0$ of
    $$ X\mapsto \bar q(X)+\lambda \|X\|^\alpha_H $$
    satisfies that $\Phi(X_0)$ is a minimizer of $f\mapsto q(f)+\lambda \|f\|_{H_k}^\alpha$ due to the norm representation.
   
   Note though, kernel ridge regression can also be implemented without knowledge of the norm on the RKHS but relies on the (numerical) solution $f_0$ to $(Q+\lambda)f_0=a$ on the finite dimensinal subspace generated by $k_{x_1},\dots,k_{x_N}$ where $x_1,\dots,x_N\in \Gamma$ are the observation points, see \cite[Theorem 1]{Schoelkopf2001representer}  for details. 
   If the number of observations $N$ is high, this is a technical problem on its own.

   An approximation of kernel ridge regression via neural networks and gradient descent on the other hand requires the implementation of the norm of the given space for the penalty term. Here, it is useful to use a generic Hilbert space like a standard $\Lc^2$-space where many known techniques are available for implementation of the norm. We shall make use of this observation in our case study. 

In the above setting of a feature map with $H=\mathcal L^2(\mathbb R^d,R)$ we now consider a Heston model for the stock price:
\begin{eqnarray}
dS_t &=& \mu S_t \, dt + \sqrt{v_t} S_t \, dW_t^S, \nonumber\\
dv_t &=& \kappa(\theta - v_t) dt + \sigma \sqrt{v_t} \, dW_t^v,\nonumber \\
dW_t^S dW_t^v &=& \rho dt,\nonumber
\end{eqnarray}
where $S_t$ is the stock price, and $\sqrt{v_t}$ the instantaneous volatility. The parameter $\kappa$ determines the mean reversion speed to the long term variance $\theta$, $\sigma$ the volatility of the volatility (vol of vol), and $\rho$ the constant correlation between the two Brownian motions $ W^S $ and $W^v$. In our case study in Section~\ref{sec:case_study}, we fix a payoff function and then aim to learn the map $(\kappa, \theta, \sigma, \rho)\mapsto P_{\kappa, \theta, \sigma, \rho}$, where $P_{\kappa, \theta, \sigma, \rho}$ is the price of the payoff under the Heston model with the parameters $\kappa, \theta, \sigma, \rho$.
\end{example}

\section{Proofs of the main results}
\label{sec:proofs}
\normalfont
As a tool, we introduced a real reproducing kernel Hilbert space (RKHS). For a linear operator $T\in L(H)$ and $\lambda\in\mathbb R$ we understand by $T+\lambda$ the bounded operator with $(T+\lambda)h:=Th+\lambda h$. For $h\in H$ we define the operator $h\otimes h\in L(H)$ via
 $$ (h\otimes h)f := h\<h,f\>,\quad f\in H.$$ We rewrite the quadratic functions $V$ and $V_N$ in terms of operator theory and the kernel $k$. Hence, we make the specifications $Q_N:=\frac1N\sum_{i=1}^Nk_{\Theta_i}\otimes k_{\Theta_i}$, $Q:=\E[Q_1]$, $a_N:=\frac1N\sum_{i=1}^NX_ik_{\Theta_i}$ and $a:=\E[a_1]$. Thus, 
\begin{align*}
    V(h) &= \E[ (h(\Theta)-X)^2 ]\\
    &= \<Qh, h\> - 2\<a,h\> + \E[X^2]  \\
    V_N(h) &= \frac1N\sum_{i=1}^N (h(\Theta_i)-X_i)^2\\
    &= \<Q_Nh, h\> - 2\<a_N,h\> + \frac1N\sum_{i=1}^NX_i^2.
\end{align*}
Due to \Cref{l:Q properties} we find that $Q\in L(H)$ and $Q_N \in \Lc^2(\Pbb,L(H))$ are positive semidefinite operators on $H$ which are trace-class and Hilbert Schmidt. Observe that, $a\in H$ and $a_N\in \Lc^2(\Pbb,H)$. In the previous section, we introduced a regularized version by adding a small ridge term $\lambda \|h\|$. Using the operator representation of $V$, we now show that the minimum of the regularized version remains approximately the same for small $\lambda$.  
\begin{proposition}\label{p:0 approx lambda}
    The unique minimizer of $V$ is $h_0$. Moreover, the minimizer $h_\lambda$ for $V_\lambda$ exists for $\lambda >0$ and we have
     $$ \limitlambda h_\lambda = h_0.$$
\end{proposition}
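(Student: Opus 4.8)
The plan is to work entirely with the operator representation $V(h)=\langle Qh,h\rangle-2\langle a,h\rangle+\E[X^2]$ and $V_\lambda(h)=\langle (Q+\lambda)h,h\rangle-2\langle a,h\rangle+\E[X^2]$, and to route the limit argument through the spectral theory of $Q$, which by \Cref{l:Q properties} is a positive, self-adjoint, trace-class (hence compact) operator on $H$. The two facts doing all the work are (i) that $Q$ is injective, and (ii) that $h_0$ solves $Qh=a$; given these, $V$ has the unique minimizer $h_0$, $V_\lambda$ has the unique minimizer $h_\lambda=(Q+\lambda)^{-1}a$, and the convergence reduces to a dominated-convergence statement about the spectral cut-off $t\mapsto \lambda^2/(t+\lambda)^2$.

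First I would record the two structural facts. For any $g\in H$, $\langle Qg,g\rangle=\E[\langle k_\Theta,g\rangle^2]=\E[g(\Theta)^2]=\|g\|_{\mathcal L^2(\mu)}^2$, so $\ker Q=\{g\in H: g=0\ \mu\text{-a.s.}\}$, which by \Cref{l:as is equal} (this is where \ref{ass:support:mu} enters) is $\{0\}$; thus $Q$ is injective. Next, using \ref{ass:minimizer:exists} and the tower property, $\langle Qh_0,g\rangle=\E[h_0(\Theta)g(\Theta)]=\E[Xg(\Theta)]=\langle a,g\rangle$ for every $g\in H$, i.e. $Qh_0=a$; the interchange of expectation with the inner product here is justified by the moment bounds \ref{ass:k:finite:secondmoment}, \ref{ass:finite:fourth:moment} and \Cref{l:norm of h(theta)}.

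From these, $V(h)-V(h_0)=\langle Q(h-h_0),h-h_0\rangle=\|h-h_0\|_{\mathcal L^2(\mu)}^2\ge 0$, with equality iff $h=h_0$ $\mu$-a.s., i.e. (again by \Cref{l:as is equal}) iff $h=h_0$ in $H$; this gives the first claim. For $\lambda>0$ the operator $Q+\lambda$ is bounded below by $\lambda$, hence boundedly invertible, so $V_\lambda$ is strictly convex and coercive and has the unique minimizer $h_\lambda=(Q+\lambda)^{-1}a=(Q+\lambda)^{-1}Qh_0$.

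It then remains to show $h_\lambda\to h_0$ in $H$ as $\lambda\downarrow 0$. Writing $h_\lambda-h_0=\big((Q+\lambda)^{-1}Q-I\big)h_0=-\lambda(Q+\lambda)^{-1}h_0$, one gets $\|h_\lambda-h_0\|^2=\langle g_\lambda(Q)h_0,h_0\rangle$ with $g_\lambda(t):=\lambda^2/(t+\lambda)^2$. Since $Q$ is compact, self-adjoint and injective, its nonzero eigenvalues $\sigma_j>0$ come with an orthonormal basis $\{e_j\}$ of eigenvectors, so with $h_0=\sum_j c_je_j$ one has $\|h_\lambda-h_0\|^2=\sum_j\big(\lambda/(\sigma_j+\lambda)\big)^2c_j^2$; each summand is dominated by the summable $c_j^2$ and tends to $0$ because $\sigma_j>0$, so dominated convergence gives $\|h_\lambda-h_0\|\to 0$ (equivalently, by the spectral theorem, $\langle g_\lambda(Q)h_0,h_0\rangle\to\langle \mathbf 1_{\{0\}}(Q)h_0,h_0\rangle=0$ since $\mathbf 1_{\{0\}}(Q)$ is the projection onto $\ker Q=\{0\}$). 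The one point requiring care — and the only real obstacle — is that $Q$ need not have closed range, so $Q^{-1}$ is unbounded and one cannot simply pass $\lambda\downarrow 0$ inside $(Q+\lambda)^{-1}$; it is precisely the injectivity of $Q$ together with the uniform bound $0\le g_\lambda\le 1$ that makes the spectral limit vanish.
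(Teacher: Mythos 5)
Your proposal is correct and follows essentially the same route as the paper: injectivity of $Q$ via \ref{ass:support:mu} and \Cref{l:as is equal}, the identity $Qh_0=a$ from conditioning on $\Theta$, the resolvent formula $h_\lambda-h_0=-\lambda(Q+\lambda)^{-1}h_0$, and the spectral/dominated-convergence argument, which is precisely the content of \Cref{l:strong_Q_convergence}. The only cosmetic difference is that you prove uniqueness by the quadratic expansion $V(h)-V(h_0)=\|h-h_0\|_{\mathcal L^2(\mu)}^2$ rather than the paper's first-order condition, which is the same convexity fact in a different guise.
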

\begin{proof}
    We observe
     $$ \nabla V(h) = 2 (Qh - a) = 2\E[k_\Theta (h(\Theta) - X)],\quad h\in H. $$
    Conditioning on $\Theta$ implies
      $$ \nabla V(h_0) = \bf0.$$
    Let $\tilde h_0\in H$ be a minimizer for $V$. Then, we have
     $$ {\bf0}= \nabla V(\tilde h_0) = 2\E[k_\Theta (\tilde h_0(\Theta) - X)] = 2\E[k_\Theta (\tilde h_0(\Theta) - h_0(\Theta))]$$
     where the last equality is obtained via conditioning on $\Theta$. Consequently, we derive
      $$ 0 = \< \tilde h_0-h_0, \nabla V(\tilde h_0)\> = 2 \E[(\tilde h_0(\Theta) - h_0(\Theta))^2] $$
      and thus $\tilde h_0=h_0$ $\Pbb^{\Theta}$-a.s. \Cref{l:as is equal} yields $\tilde h_0=h_0$. Due to \Cref{l:Q properties} $Q$ has a trivial kernel. \Cref{p:h to h0} implies $h_\lambda \rightarrow h_0$ as $\lambda \rightarrow 0$.
\end{proof}

To prove the main result, \Cref{t:approximation}, we first need a few auxiliary statements, such as the following Monte Carlo bounds:
\begin{lemma}\label{l:MC estimate}
    For every $N\in\mathbb N$ we have
    \begin{align*}
        \E[ \|a_N-a\|^2] &= \frac{\E[\|a_1-a\|^2]}{N}, \\
        \E[ \|Q_N-Q\|^2_{HS}] &= \frac{\E[\|Q_1-Q\|_{HS}^2]}{N}.        
    \end{align*}
\end{lemma}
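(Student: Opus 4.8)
The statement to prove is Lemma~\ref{l:MC estimate}, which asserts two variance-of-the-mean identities: $\E[\|a_N-a\|^2] = \E[\|a_1-a\|^2]/N$ and $\E[\|Q_N-Q\|_{HS}^2] = \E[\|Q_1-Q\|_{HS}^2]/N$.

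The plan is to treat this as the standard ``variance of an i.i.d.\ average'' computation, carried out in the Hilbert spaces $H$ and $\mathrm{HS}(H)$ (the Hilbert--Schmidt operators, which form a Hilbert space under $\langle A,B\rangle_{HS}=\sum_j \langle Ae_j, Be_j\rangle$ for an orthonormal basis $(e_j)$). First I would recall that $a_N = \frac1N\sum_{i=1}^N X_i k_{\Theta_i}$ is the empirical mean of the i.i.d.\ $H$-valued random variables $Y_i := X_i k_{\Theta_i}$, each with mean $a = \E[Y_1]$, and that $Q_N = \frac1N\sum_{i=1}^N k_{\Theta_i}\otimes k_{\Theta_i}$ is the empirical mean of the i.i.d.\ $\mathrm{HS}(H)$-valued random variables $Z_i := k_{\Theta_i}\otimes k_{\Theta_i}$, each with mean $Q = \E[Z_1]$. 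Both $Y_i$ and $Z_i$ are square-integrable in the relevant Hilbert-space sense: $\E\|Y_1\|^2 = \E[X_1^2\, k(\Theta_1,\Theta_1)] < \infty$ by Cauchy--Schwarz using \ref{ass:k:finite:secondmoment} and \ref{ass:finite:fourth:moment}, and $\E\|Z_1\|_{HS}^2 = \E[k(\Theta_1,\Theta_1)^2] < \infty$ by \ref{ass:k:finite:secondmoment}; in fact \Cref{l:Q properties} already records the needed integrability. This justifies interchanging expectation and inner product below.

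The key computation is identical in both cases. Writing $\bar Y = a_N$, $\mu_Y = a$ and using bilinearity,
\begin{align*}
    \E\big[\|a_N - a\|^2\big]
    &= \E\Big[\Big\| \frac1N\sum_{i=1}^N (Y_i - a) \Big\|^2\Big]
    = \frac1{N^2}\sum_{i=1}^N\sum_{j=1}^N \E\big[\langle Y_i - a,\, Y_j - a\rangle\big].
\end{align*}
For $i\neq j$, independence of $Y_i$ and $Y_j$ together with $\E[Y_i - a] = 0$ gives $\E[\langle Y_i-a, Y_j-a\rangle] = \langle \E[Y_i-a], \E[Y_j-a]\rangle = 0$ (here one uses that the $H$-valued mean commutes with the continuous bilinear pairing, i.e.\ $\E[\langle Z, v\rangle] = \langle \E[Z], v\rangle$ for fixed $v$, applied after conditioning). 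For $i = j$ the term equals $\E[\|Y_1 - a\|^2]$ by the i.i.d.\ assumption. Hence the double sum collapses to $N\cdot\E[\|Y_1-a\|^2]$, yielding $\E[\|a_N-a\|^2] = \E[\|a_1-a\|^2]/N$. Replacing $(H,\|\cdot\|,Y_i,a)$ by $(\mathrm{HS}(H),\|\cdot\|_{HS},Z_i,Q)$ verbatim gives the second identity.

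I do not expect a genuine obstacle here; the only point requiring a line of care is the justification that the cross terms vanish, i.e.\ that for independent square-integrable Hilbert-space-valued random variables $U,W$ one has $\E[\langle U,W\rangle] = \langle \E U, \E W\rangle$. This follows from Fubini on the product space (the integrand $\langle u, w\rangle$ is jointly measurable and absolutely integrable by Cauchy--Schwarz and $\E\|U\|^2,\E\|W\|^2<\infty$) or, equivalently, from the fact that the Bochner expectation satisfies $\langle \E U, v\rangle = \E\langle U, v\rangle$ for every fixed $v$ and then integrating against the law of $W$. A brief remark that this elementary fact is being invoked suffices; everything else is bookkeeping with i.i.d.\ sums.
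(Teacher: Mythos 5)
Your proof is correct and is essentially the same argument as the paper's: the paper delegates to \Cref{l:MC in H} and \Cref{c:MC in HS}, whose proofs consist of exactly your expansion of the squared norm into diagonal and vanishing cross terms in $H$ and in the Hilbert space of Hilbert--Schmidt operators. The only cosmetic difference is that you invoke independence where the appendix lemmas only assume uncorrelatedness, and you spell out the integrability and Fubini details that the paper leaves implicit.
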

\begin{proof}
    The statement follows from \Cref{l:MC in H} and \Cref{c:MC in HS}.
\end{proof}

Next, we show that the ridge regression $V_\lambda$ can be solved approximately by the random ridge regression $V_{N,\lambda}$ and quantify the $\Lc^1(\Pbb,H)$ error.
\begin{proposition}\label{p:random approximation}
    Let $\lambda \in(0,1]$. The minimizer $h_{N,\lambda}\in \Lc^2(\Pbb,H)$ of the random quadratic function $V_{N,\lambda}$ exists and there is a constant $C>0$ (not depending on $N$ or $\lambda$) such that
     $$ \E[\| h_{N,\lambda} - h_\lambda\|] \leq \frac{C}{\lambda^2 \sqrt{N}}. $$
\end{proposition}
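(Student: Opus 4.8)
The plan is to exploit the fact that, thanks to the operator representation already set up, both minimizers are available in closed form and then to compare them via a resolvent-identity decomposition. First I would observe that since $Q_N$ is positive semidefinite, $Q_N+\lambda$ is bounded below by $\lambda>0$, hence boundedly invertible with $\|(Q_N+\lambda)^{-1}\|_{\op}\le 1/\lambda$; in particular $V_{N,\lambda}$ is a strictly convex, coercive quadratic on $H$, so a unique minimizer exists and is characterised by $\nabla V_{N,\lambda}(h)=2\big((Q_N+\lambda)h-a_N\big)=\mathbf 0$, i.e.\ $h_{N,\lambda}=(Q_N+\lambda)^{-1}a_N$. Since $\|h_{N,\lambda}\|\le\lambda^{-1}\|a_N\|$ and $a_N\in\Lc^2(\Pbb,H)$, we get $h_{N,\lambda}\in\Lc^2(\Pbb,H)$ as claimed; likewise $h_\lambda=(Q+\lambda)^{-1}a$ with $\|h_\lambda\|\le\lambda^{-1}\|a\|$ by the same spectral bound applied to $Q$.

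Next I would write
$$ h_{N,\lambda}-h_\lambda = (Q_N+\lambda)^{-1}(a_N-a) + \big((Q_N+\lambda)^{-1}-(Q+\lambda)^{-1}\big)a $$
and apply the resolvent identity $(Q_N+\lambda)^{-1}-(Q+\lambda)^{-1}=(Q_N+\lambda)^{-1}(Q-Q_N)(Q+\lambda)^{-1}$ to the second term, obtaining
$$ h_{N,\lambda}-h_\lambda = (Q_N+\lambda)^{-1}(a_N-a) + (Q_N+\lambda)^{-1}(Q-Q_N)h_\lambda. $$
Taking norms and using $\|(Q_N+\lambda)^{-1}\|_{\op}\le 1/\lambda$, $\|h_\lambda\|\le\|a\|/\lambda$, and $\|Q-Q_N\|_{\op}\le\|Q-Q_N\|_{HS}$ gives the pointwise estimate
$$ \|h_{N,\lambda}-h_\lambda\| \le \frac{1}{\lambda}\|a_N-a\| + \frac{\|a\|}{\lambda^2}\|Q_N-Q\|_{HS}. $$

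Finally I would take expectations, bound each term by Jensen's inequality, and invoke \Cref{l:MC estimate}: $\E[\|a_N-a\|]\le\big(\E[\|a_1-a\|^2]/N\big)^{1/2}$ and $\E[\|Q_N-Q\|_{HS}]\le\big(\E[\|Q_1-Q\|_{HS}^2]/N\big)^{1/2}$, both finite because $a_1\in\Lc^2(\Pbb,H)$ and $Q_1\in\Lc^2(\Pbb,L(H))$ by \Cref{l:Q properties}. Using $\lambda\in(0,1]$ so that $\lambda^{-1}\le\lambda^{-2}$, the two contributions merge into a single bound $C\lambda^{-2}N^{-1/2}$ with $C=\big(\E[\|a_1-a\|^2]\big)^{1/2}+\|a\|\big(\E[\|Q_1-Q\|_{HS}^2]\big)^{1/2}$, which depends on neither $N$ nor $\lambda$. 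I do not anticipate a genuine obstacle here: the only points requiring care are making sure the constant $C$ is not secretly $\lambda$-dependent, and routing $\|Q_N-Q\|_{\op}$ through the Hilbert--Schmidt norm so that the sharp $1/N$ second-moment identity of \Cref{l:MC estimate} can be applied rather than a cruder bound.
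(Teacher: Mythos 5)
Your proof is correct and follows the same overall route as the paper: closed-form minimizers $h_{N,\lambda}=(Q_N+\lambda)^{-1}a_N$ and $h_\lambda=(Q+\lambda)^{-1}a$, a resolvent perturbation bound, and the $1/N$ second-moment Monte Carlo identities. The one genuine difference is the splitting: the paper writes $h_{N,\lambda}-h_\lambda=\big((Q_N+\lambda)^{-1}-(Q+\lambda)^{-1}\big)a_N+(Q+\lambda)^{-1}(a_N-a)$, so its second factor in the operator-difference term is the \emph{random} vector $a_N$, forcing a Cauchy--Schwarz step $\E[\|Q_N-Q\|_{\op}\|a_N\|]\le\sqrt{\E[\|Q_N-Q\|_{\op}^2]}\sqrt{\E[\|a_N\|^2]}$ plus a separate bound on $\sqrt{\E[\|a_N\|^2]}$. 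You instead apply the resolvent difference to the \emph{deterministic} $a$, landing on $(Q_N+\lambda)^{-1}(Q-Q_N)h_\lambda$ with $\|h_\lambda\|\le\|a\|/\lambda$ a fixed scalar; expectations then decouple and only Jensen is needed. Both yield the same rate and a $\lambda$-free constant (yours is marginally cleaner), and both rely on the same ingredients (\Cref{l:inverse norm}, the bound $\|T\|_{\op}\le\|T\|_{HS}$, \Cref{l:MC estimate}, and $\lambda\le 1$ to absorb the $\lambda^{-1}$ term into $\lambda^{-2}$). Your resolvent identity is exactly the content of \Cref{l:difference of inverse}, so nothing new is assumed. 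No gaps.
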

\begin{proof}
    Lemma \ref{l:minimizer} yields existence of the minimizer and
    \begin{align*}
        h_\lambda &= (Q+\lambda)^{-1}a, \\
        h_{N,\lambda} &=(Q_N+\lambda)^{-1}a_N.
    \end{align*}
    Using \Cref{l:difference of inverse} we obtain
     $$ \|(Q_N+\lambda)^{-1}-(Q+\lambda)^{-1}\|_{\op} \leq \frac{\|Q_N-Q\|_{\op}}{\lambda^2}.$$
    Thus, we find
     \begin{align*}
         \|h_{N,\lambda} - h_\lambda\| &\leq \|((Q_N+\lambda)^{-1}-(Q+\lambda)^{-1})a_N\| + \|(Q+\lambda)^{-1}(a_N-a)\| \\
           &\leq \|((Q_N+\lambda)^{-1}-(Q+\lambda)^{-1})\|_{\op}\|a_N\| + \|(Q+\lambda)^{-1}\|_{\op}\|a_N-a\| \\
           &\leq \frac{\|Q_N-Q\|_{\op}}{\lambda^2}\|a_N\|+\frac{\|a_N-a\|}\lambda,
     \end{align*} 
     where we used \Cref{l:difference of inverse,l:inverse norm} for the last inequality. 
     Using expectation and the Cauchy-Schwarz inequality yields
     \begin{align*}
         \E[\|h_{N,\lambda} - h_\lambda\| ] &\leq \frac1{\lambda^2} \sqrt{\E[ \|Q_N-Q\|^2_{\op}]}\sqrt{\E[\|a_N\|^2]}+ \frac{\E[\|a_N-a\|]}{\lambda} .
     \end{align*}
     Exploiting the inequality $\|\T\|_{\op}\leq \|T\|_{HS}$ which holds for any symmetric operator $T\in L(H)$ and \Cref{l:MC estimate} we derive
     \begin{align*}
         \E[\|h_{N,\lambda} - h_\lambda\| ] &\leq \frac{\sqrt{\E[ \|Q_1-Q\|^2_{HS}]}}{\sqrt{N}\lambda^2} \sqrt{\E[\|a_N\|^2]} + \frac{\sqrt{\E[\|a_1-a\|^2]}}{\sqrt{N}\lambda}.
     \end{align*}
     Since $\sqrt{\E[\|\cdot\|^2]}$ is a norm and due to \Cref{l:MC estimate} we have
      $$ \sqrt{\E[\|a_N\|^2]} \leq \sqrt{\E[\|a_N-a\|^2]} + \|a\| = \frac{\sqrt{\E[\|a_1-a\|^2]}}{\sqrt{N}} + \|a\|.$$
     Choosing
      $$ C:=  \sqrt{\E[ \|Q_1-Q\|^2_{HS}]}\left(\sqrt{\E[\|a_1-a\|^2]} + \|a\|\right) + \sqrt{\E[\|a_1-a\|^2]}$$
      yields the claim.
\end{proof}
Combining the previous results, we now prove the main result \Cref{t:approximation}.
\begin{proof}[Proof of \Cref{t:approximation}]
    Let $N\in\mathbb N$ and $\widetilde C>0$ be the constant from Proposition \ref{p:random approximation}. Proposition \ref{p:h to h0} yields $\|h_{\lambda_N}-h_0\|_{\mathcal L^2(\mu )} \leq \sqrt{\lambda_N} \|h_0\|/2$. Additionally using
     $ \|h\|_{\mathcal L^2(\mu )} = (\<Qh,h\>)^{1/2} \leq \|Q\|_{\op}^{1/2} \|h\|$, we conclude
     \begin{align*}
         \E[ \|h_{N,\lambda_N}-h_0\|_{\mathcal L^2(\mu )}] &\leq \E[ \|h_{N,\lambda_N}-h_{\lambda_N}\|_{\mathcal L^2(\mu )}] + \|h_{\lambda_N}-h_0\|_{\mathcal L^2(\mu )} \\
             &\leq \frac{\widetilde C\|Q\|_{\op}^{1/2}}{\lambda_N^2\sqrt{N}} + \frac{\sqrt{\lambda_N}\|h_0\|}{2} \\
             &\leq \Big(\widetilde C\|Q\|_{\op}^{1/2} +  \frac{\|h_0\|}{2}\Big) \max\big\{\lambda_N^{-2}N^{-\frac12},\sqrt{\lambda_N}\big\}
     \end{align*}  
     as required.
\end{proof}

\begin{proof}[Proof of \Cref{p:loss}]
    Lemma \ref{l:minimizer} yields existence of the minimizer and
    \begin{align*}
        h_{N,\lambda} &=(Q_N+\lambda)^{-1}a_N, \\
        V_{N}(h_{N,\lambda}) &= \frac1N\sum_{i=1}^NX_i^2 - \<a_N,h_{N,\lambda}\>.
    \end{align*}
   Combining $a_N\rightarrow a$ in $\Lc^2(\Pbb,H)$ with \ref{ass:finite:fourth:moment} implies $\frac1N\sum_{i=1}^NX_i^2 \rightarrow \E[X^2]$ in $\Lc^2(\Pbb)$, and hence we find $$\lim\limits_{N\to\infty}V_{N}(h_{N,\lambda_N}) = V(h_0)$$ in $\Lc^1(\Pbb)$.
   We also have
    $$ V(h_0) = \E[(h_0(\Theta)-X)^2] = \E[ (X^2-\E[X\cond{}\Theta]^2)] = \E[\Var(X\cond{}\Theta)].$$
\end{proof}
\section{Case study}\label{sec:case_study}
In this section, we present a simulation study to verify the results for MinMC numerically. We focus here on a European call option with strike $K=100$ and the initial value of the asset $x=100$. To compute the argmin of $V_{N,\lambda}$, we use a neural network. For the training, we use the gradient-based optimization algorithm Adam with 50 epochs, a batch size of 32, and a learning rate of 0.0001. For the network structure, we used the hyperbolic tangent as the activation function, with two hidden layers, and 200 nodes in each layer.

First, we take a closer look at the Black-Scholes model with volatility $10 \% - 20 \%$. In Example \ref{exm:[0,1]} we introduce a suitable theoretical setting for the MinMC framework. Recall that the volatility is $(1+\theta)\cdot 0.1$ for $\theta\in [0,1]$. Since the price formula is known, an approximation is not necessary, but it allows us to verify the accuracy of our approximation results. 

\begin{figure}[h]
    \centering
    \input{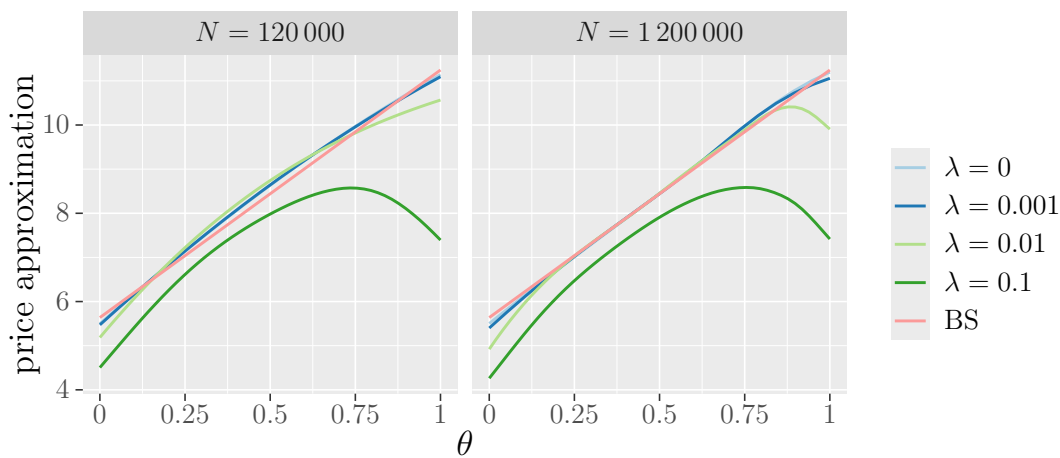}
    \caption{Model approximations for a European call option in the Black-Scholes model for different values of $\lambda$, but based on the same samples, compared to the true price (Black–Scholes formula).}
    \label{fig:BS:approximationonsamesample}
\end{figure}
In \Cref{fig:BS:approximationonsamesample} we visualize the price approximation for different values of $\lambda$ based on the same samples $(X_1,\Theta_1),\dots,(X_N,\Theta_N)$ and the true price obtained using the Black-Scholes formula. A noticeable improvement is observed with decreasing $\lambda$. The results for $\lambda = 0$ and $\lambda = 0.001$ are nearly indistinguishable. Notably, $\lambda = 0$ produces excellent results, although we have not provided a formal proof in this case.
For both values of $N$, the simulation for $\lambda=0.1$ tends to significantly underestimate the price. Since the minimizer of $V_{N,\lambda}$ has a limit for every fixed $\lambda>0$, see \Cref{p:random approximation}, the results suggest that this limiting behavior is effectively reached at $N= 120\, 000$ for $\lambda=0.1$. As mentioned in \Cref{rem:convergencerate} we need $\lambda_N\to 0$ and $\lambda_NN^{\frac14}\to \infty$, hence choosing $\lambda < N^{-\frac14}$ seems reasonable. Here, $(120\,000)^{-\frac14}\approx 0.0542$ justifies this behavior. 
\begin{figure}[h]
	\centering
		\centering
		\input{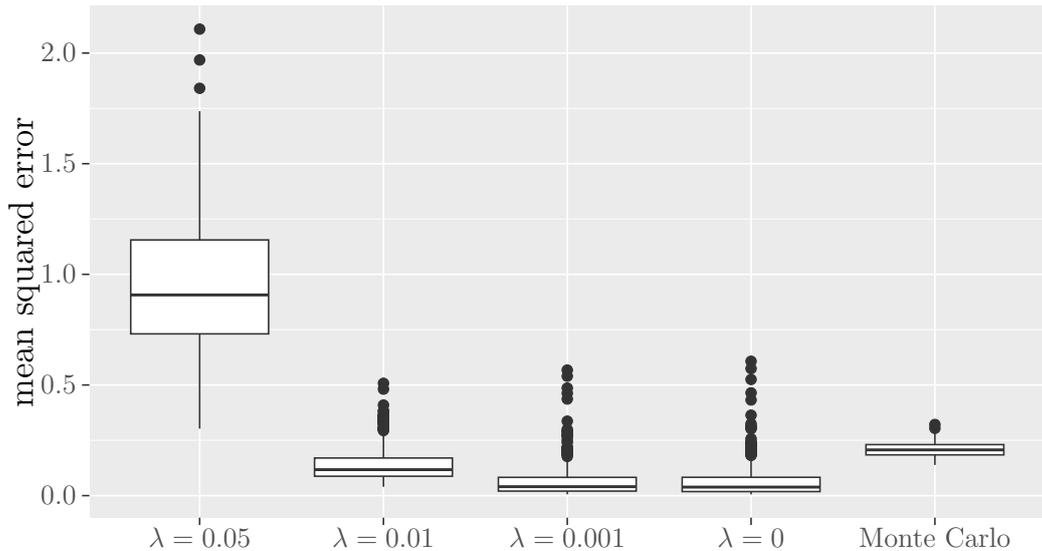}
	\caption{Mean squared error in the Black-Scholes model based on $500$ simulation runs for $N=100\, 000$ compared to Monte Carlo simulations (each with $1\, 000$ samples) for $100$ $\theta$ values chosen equidistant in $[0,1]$.} 
    \label{fig:BS:mselambdacomparedtoMC}
\end{figure}
\begin{table}[h]
\centering
\begin{tabular}{|l|c|c|c|}
 \hline & median & 25\% quantile & 75\% quantile \\
\hline 
$\lambda=0.05$ & 0.9068& 0.7310&1.1554\\
$\lambda=0.01$&0.1173&0.0876&0.1700\\
$\lambda=0.001$&0.0404&0.0204&0.0826\\
$\lambda=0$& 0.0385&0.0183&0.0828\\
Monte Carlo &0.2066&0.1842 & 0.2305\\
\hline 
\end{tabular}
\caption{Median, 25\% and 75\% quantiles of the mean squared error obtained from 500 repeated price approximations for various $\lambda$ values and the Monte Carlo method. These values correspond to the boxplots shown in \Cref{fig:BS:mselambdacomparedtoMC}.}
\label{tab:mse_quantile_comparison}
\end{table}

Next, we fit the neural network $500$ times using different training sets each time, compute the corresponding price approximations, and calculate the mean squared error for various values of $\lambda$.
 In comparison, we perform Monte Carlo simulations with $1\,000$ samples for $100$ $\theta$ values, chosen equidistant in $[0,1]$. In all cases, the mean squared error is calculated with respect to the Black-Scholes model on these equidistant values. For the MinMC neural network, we choose samples $N=100\cdot 1\,000$ to ensure that the simulation results are based on the same total number of random variables. The resulting boxplots are presented in \Cref{fig:BS:mselambdacomparedtoMC}. The Monte Carlo simulation has the fewest outliers. This is not surprising since we calculate the mean squared errors on the 100 equidistant $\theta$ values and the Monte Carlo simulation is performed exactly for these values of $\theta$. The boxplots show again an improvement with decreasing $\lambda$. Although no visible improvement can be observed when comparing $\lambda = 0.001$ and $\lambda = 0$ in \Cref{fig:BS:mselambdacomparedtoMC}, the corresponding median and quantiles presented in \Cref{tab:mse_quantile_comparison} indicate a slight improvement. MinMC provides better results than the Monte Carlo method for $\lambda\in \{0.01,0.001,0\}$. 
\begin{figure}[h]
    \centering
    \centering
    \input{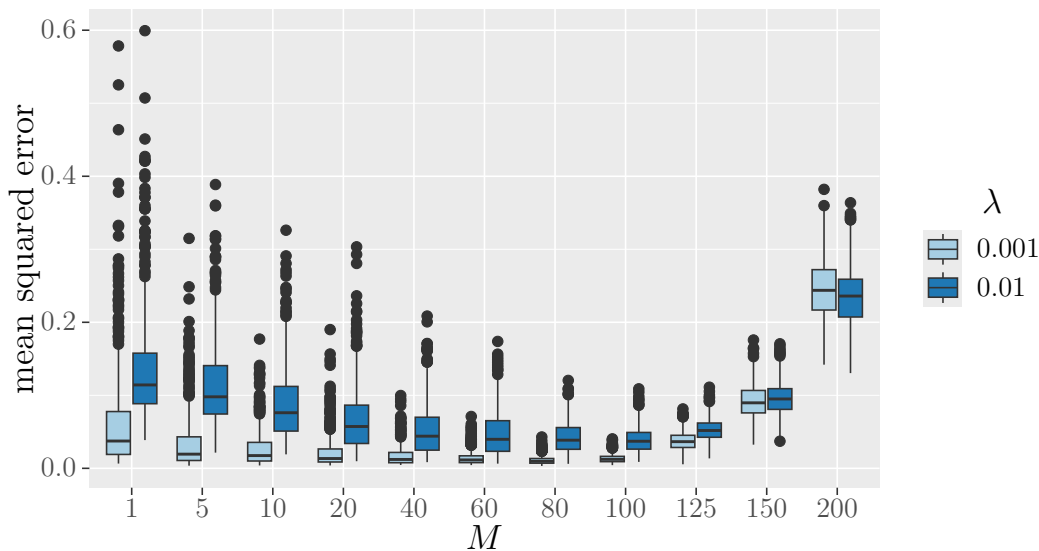}
\caption{Mean squared error in the Black-Scholes model based on $500$ simulation runs for $\lambda=0.01$ and $\lambda=0.001$ with fixed number of random variables $M\cdot N=120\, 000$. For fixed $M$ the simulations are based on the same samples.}
\label{fig:BS:msedifferentM}
\end{figure}

We now combine the MinMC setting with a Monte Carlo average to find the samples $(X_1,\Theta_1),\dots,(X_N, \Theta_N)$. For each model parameter $\Theta_i$ we sample $M$ i.i.d.\ random variables $Y_{i,1},\dots,Y_{i,M}$ from this model and define the average 
\begin{align}
\label{eq:preaverage}
    X_i:=\frac1M \sum\limits_{j=1}^MY_{i,j}.
\end{align} This results in an i.i.d.\ sequence $(X_1,\Theta_1),\dots,(X_N, \Theta_N)$ which satisfies the MinMC assumptions. The conditional variance of the $X_i$, which appears in Proposition \ref{p:loss}, is then lower as the conditional variance of $Y_{i,j}$. Increasing the number $M$ allows to reduce the observed mse-error with increasing $M$. We fix the number of random variables $M\cdot N$ and calculate the mean squared error for different values of $M$. The results of $500$ neural network fittings, each performed on a different training set, are visualized in \Cref{fig:BS:msedifferentM} for $\lambda=0.01$ and $\lambda=0.001$. Given that $\Theta_i$ is uniformly distributed over the interval $[0, 1]$, it can be assumed that some degree of averaging naturally occurs when considering a sufficiently large number of closely spaced values. In particular, MinMC performs well for $M = 1$. However, the boxplots clearly show that explicitly performing averaging yields improved results. The mean squared error initially decreases with increasing values of $M$, reaches an optimum in the range of $M = 60$ to $M = 125$, and subsequently increases as $M$ continues to increase. The smallest median in \Cref{fig:BS:msedifferentM} is $0.0371$ in the case $\lambda=0.01$ for $M=100$ and $0.0097$ in the case $\lambda=0.001$ for $M=80$.
\begin{figure}[h]
		\centering		\input{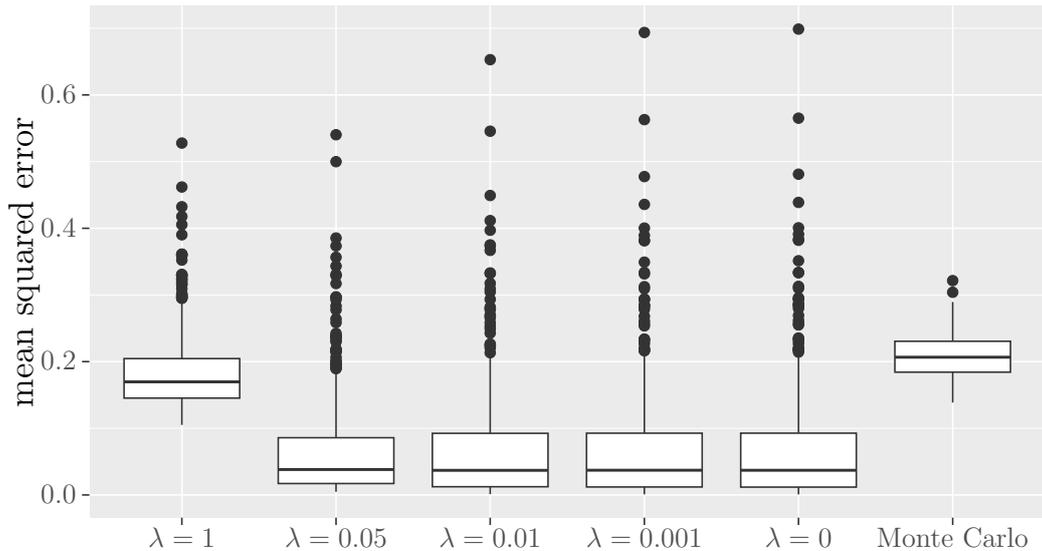}
	\caption{Mean squared error in the Black-Scholes model for the feature map setting based on $500$ simulation runs for $N=100\, 000$ compared to Monte Carlo simulations (each with $1\, 000$ samples) for $100$ $\theta$ values chosen equidistant in $[0.1,0.2]$.}
    \label{fig:BS:featuremap:mselambdacomparedtoMC}
\end{figure}
\begin{table}[h]
\centering
\begin{tabular}{|l|c|c|c|}
 \hline & median & 25\% quantile & 75\% quantile \\
\hline 
$\lambda=1$ & 0.1697&0.1453
&0.2045\\
$\lambda=0.05$ & 0.0381& 0.0171&0.0858\\
$\lambda=0.01$& 0.0369&0.0123&0.0924\\
$\lambda=0.001$&0.0371&0.0120&0.0927\\
$\lambda=0$& 0.0370&0.0118&0.0927\\
Monte Carlo &0.2066&0.1842 & 0.2305\\
\hline 
\end{tabular}
\caption{Median, 25\% and 75\% quantiles of the mean squared error obtained from 500 repeated price approximations for various $\lambda$ values and the Monte Carlo method. These values correspond to the boxplots shown in \Cref{fig:BS:featuremap:mselambdacomparedtoMC}.}
\label{tab:mse_quantile_comparison:featuremap}
\end{table}

In \Cref{exm:featuremap}, we introduced feature maps. By choosing $\sigma=\tanh$, the MinMC setting for the Black-Scholes model can be applied here as well. Since the setting is not restricted to the interval $[0,1]$ as in Example \ref{exm:[0,1]}, we consider the volatility $\theta\in [0.1,0.2]$ directly, without any rescaling. Analogous to \Cref{fig:BS:mselambdacomparedtoMC}, we compare the mean squared error for different $\lambda$ values based on 500 neural network fittings with Monte Carlo simulations in \Cref{fig:BS:featuremap:mselambdacomparedtoMC}. Here and in what follows, the mean squared error is computed on 100 equidistant points in the interval $[0.1,0.2]$. We observe that the feature maps yield strong performance even for larger values of $\lambda$, indicating that regularization does not need to be significantly reduced. In all presented cases, MinMC provides better results than Monte Carlo simulations based on the same total number of random variables. In particular, the results for $\lambda\in\{0.05,0.01,0.001,0\}$ are almost indistinguishable with the smallest median $0.0381$ achieved for $\lambda=0.05$. The medians, see \Cref{tab:mse_quantile_comparison:featuremap}, are slightly better than the optimal median of $0.0385$ for $\lambda=0$ in the previous simulations, see \Cref{tab:mse_quantile_comparison}. The outliers of MinMC are similarly scattered in both settings, if we disregard the case $\lambda=0.05$ in Figure~\ref{fig:BS:mselambdacomparedtoMC}.
\begin{figure}[h]
    \centering
\input{BlackScholesfeaturedifferentM}
\caption{Mean squared error in the Black-Scholes model for the feature map setting based on $500$ simulation runs for $\lambda=0.01$ and $\lambda=0.001$ with fixed number of random variables $M\cdot N=120\, 000$. For fixed $M$ the simulations are based on the same samples.}
\label{fig:BS:feature:msedifferentM}
\end{figure}

As before, we compute a Monte Carlo average over $M$ random variables for each model to obtain $N$ training samples with the total number of random variables fixed at $M\cdot N = 120\,000$, see \eqref{eq:preaverage}. The results for $\lambda=0.01$ and $\lambda=0.5$ are presented in \Cref{fig:BS:feature:msedifferentM}. In general, the mean squared error behaves similarly as in \Cref{fig:BS:msedifferentM}: it decreases with increasing $M$, reaches a minimum, and subsequently increases as $M$ becomes larger. However, the minimal medians in \Cref{fig:BS:feature:msedifferentM} are reached for significantly smaller values of $M$: 0.0535 at $M=10$ for $\lambda=0.5$ and 0.0095 at $M=30$ for $\lambda=0.01$. Although the optimal choice of $M$ differs depending on the specific setting, the behavior of the mean squared error at its optimum remains consistent. Overall, MinMC clearly shows superior performance compared to a classical Monte Carlo simulation. This is particularly surprising since the Monte Carlo simulations are performed exactly on the $100$ equidistant grid values in $[0,1]$ on which both MinMC and the classical Monte Carlo are benchmarked against the Black-Scholes formula.

Now, we consider the Heston model in this setting of feature maps, see Example \ref{exm:featuremap}. 
\begin{figure}[h]
    \centering
    \includegraphics[width=12cm]{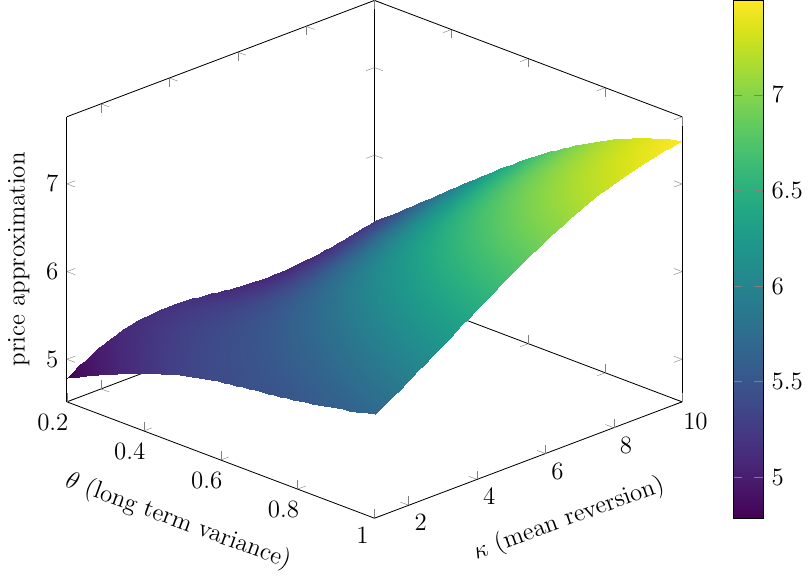}
    \caption{Heat map to visualize the MinMC approximation for a European call option price in the Heston model with $N=600\,000$ random variables, correlation $\rho=-0.5$ and volatility of volatility $\sigma=0.2$.}
    \label{fig:heatmap:Heston}
\end{figure}
We perform simulations under the Heston model in two different settings: in the first, we estimate the price function with respect to two parameters; in the second, with respect to three. In both cases, we fix the correlation at $\rho = -0.5$ and estimate the option price as a function of the mean reversion rate $\kappa \in [1, 10]$ and the long-term variance $\theta \in [0.2, 1]$. In the two-parameter case, we fix the volatility of volatility at $\sigma = 0.2$, whereas in the three-parameter case, $\sigma$ varies within the range $[0.2, 0.5]$. In Figure \ref{fig:heatmap:Heston} we visualize the price estimate by MinMC for $\lambda=0.001$ based on a sample with $N=600\,000$ random variables and maturity $T=1/12$ in the two-parameter case.
\begin{figure}[h]
    \centering
    \input{boxplot_Heston_Ktrain120000}
    \caption{Mean squared error in the Heston model based on $500$ simulation runs for various maturities with fixed number of random variables $M\cdot N=120\,000$. For fixed $M$ the simulations are based on the same samples.}
    \label{fig:boxplot:Heston:120k}
\end{figure}
\begin{figure}[h]
    \centering
    \input{boxplot_Heston_Ktrain600000}
    \caption{Mean squared error in the Heston model based on $500$ simulation runs for various maturities with fixed number of random variables $M\cdot N=600\,000$. For fixed $M$ the simulations are based on the same samples.}
    \label{fig:boxplot:Heston:600k}
\end{figure}

In contrast to the Black-Scholes model, the Heston model does not admit a closed-form solution for option prices in the general case. Consequently, to compute a benchmark for evaluating the mean squared error, we approximate the price using Monte Carlo simulation. In the two-parameter setting, we discretize each parameter interval into $100$ equidistant points, yielding a grid of $100^2$ parameter combinations. For each grid point, we estimate the benchmark price using $10^6$ Monte Carlo simulations. In the three-parameter case, we use $35$ equidistant points per dimension, resulting in $35^3$ total grid points, with the same number $10^6$ of Monte Carlo simulations per point. The mean-squared errors in the following are calculated based on these benchmarks and grids depending on the case (2 or 3 parameters). 

As in the Black-Scholes model, we perform pre-averaging using $M$ Monte Carlo simulations per parameter $\Theta_i$ to determine our $N$ random variables for the neural network fitting, see \eqref{eq:preaverage}. We fix the total number of random variables to $M\cdot N = 120\, 000$ or $M\cdot N = 600\,000$, respectively. The neural network is then trained $500$ times using different training sets in each run. For each trained model, we compute the corresponding price approximations and evaluate the mean squared error using the described benchmarks. The corresponding boxplots for different maturities are presented in Figures~\ref{fig:boxplot:Heston:120k} and~\ref{fig:boxplot:Heston:600k}. 

The first observation across all constellations is that the smaller the maturity, the smaller the mean squared error. Furthermore, for the same number of random variables, the case in which the function depends on two parameters is estimated more accurately than the case with three parameters. This is expected, as increasing the number of parameters complicates the approximation of the pricing function. This difference is particularly pronounced for $M\cdot N=120\,000$, see Figure \ref{fig:boxplot:Heston:120k}, but becomes significantly smaller for $M\cdot N=600\,000$ in Figure~\ref{fig:boxplot:Heston:600k}, where the overall error levels are notably lower. Perhaps in the first case, the total number of random variables is too small to estimate a function in three variables. In Figure~\ref{fig:boxplot:Heston:120k}, the smallest median values are attained at $M = 5$, although they are comparable to those observed for $M \in \{1, 10\}$, where the boxplots indicate lower variability. A similar observation can be made for Figure~\ref{fig:boxplot:Heston:600k}, where, however, the best median is achieved at $M=50$. Overall, the mean squared error in both figures behaves similar for the first three presented $M$ values and then increases monotonically.  Both figures suggest that having a greater number of grid points is more beneficial than using highly accurate grid points. A small value of $M$ may lead to some improvement. This is likely due to the fact that the function depends on multiple parameters. When $M \cdot N$ is too small, the function must first be explored across all relevant dimensions before more accurate evaluations contribute effectively to the estimation. Overall, also in this example, MinMC shows a strong performance gain compared to a classical Monte Carlo method.

\appendix
\section{Operator theoretic results}
In this section we provide all the results used from operator theory. We assume the same setting as in Section 2 and 3.

\begin{lemma}
\label{l:norm of h(theta)}
    Let $C:=\int_\Gamma \big(k(\theta,\theta)\big)^2\mu(d\theta)<\infty$ and $h\in H$. Then, we have
     $$ \E[h(\Theta)^4] \leq \|h\|^4 C <\infty.$$
\end{lemma}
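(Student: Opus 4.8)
The plan is to reduce everything to the reproducing property of $k$ together with the Cauchy--Schwarz inequality, and then integrate. First I would recall that for $h\in H$ and $\theta\in\Gamma$ one has the pointwise identity $h(\theta)=\langle h,k_\theta\rangle$, which is exactly the defining property of the kernel $k$ obtained from the Riesz representation theorem. Applying Cauchy--Schwarz in $H$ gives $|h(\theta)|\le \|h\|\,\|k_\theta\|$ for every $\theta\in\Gamma$.

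Next I would compute $\|k_\theta\|$ explicitly: again by the reproducing property, $\|k_\theta\|^2=\langle k_\theta,k_\theta\rangle=k_\theta(\theta)=k(\theta,\theta)$. Combining this with the previous bound yields the pointwise estimate $h(\theta)^4\le \|h\|^4\,\|k_\theta\|^4=\|h\|^4\,\big(k(\theta,\theta)\big)^2$, valid for all $\theta\in\Gamma$.

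Finally I would integrate this inequality against the law of $\Theta$. Since $\Pbb^\Theta=\mu$ (as noted in Section~\ref{sec:mainresults}), we have
\[
\E\big[h(\Theta)^4\big]=\int_\Gamma h(\theta)^4\,\mu(d\theta)\le \|h\|^4\int_\Gamma \big(k(\theta,\theta)\big)^2\,\mu(d\theta)=\|h\|^4\,C<\infty,
\]
which is the claim; here the finiteness of the right-hand side is precisely the hypothesis $C<\infty$ (equivalently \ref{ass:k:finite:secondmoment}).

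There is no serious obstacle in this argument; the only points requiring a line of justification are that $\theta\mapsto h(\theta)$ and $\theta\mapsto k(\theta,\theta)$ are measurable (the former because $h\in H\subseteq C(\Gamma,\R)$, the latter because it is the function whose square appears in the assumption and is in any case expressible through continuous kernel evaluations), so that the integrals above are well defined and the monotonicity of the integral applies.
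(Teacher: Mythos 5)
Your argument is correct and is essentially identical to the paper's proof: both use the reproducing property $h(\theta)=\langle h,k_\theta\rangle$, Cauchy--Schwarz, the identity $\|k_\theta\|^2=k(\theta,\theta)$, and then integrate using $\Pbb^\Theta=\mu$. No issues.
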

\begin{proof}
    Using the Cauchy-Schwarz inequality and the definition of the kernel yield $|h(\Theta)|=|\< h,k_\Theta\>|\leq \|h\|\cdot \|k_\Theta\| = \|h\| \sqrt{k(\Theta,\Theta)}$. Thus, we get
    $$ \E[h(\Theta)^4]  \leq \|h\|^4 \E[(k(\Theta,\Theta))^2] = \|h\|^4 C $$
    where the last equality follows from $\Pbb^\Theta = \mu$.
\end{proof}

\begin{lemma}\label{l:as is equal}
 Let $h,g\in H$ with $h=g$ $\Pbb^\Theta$-a.s. Then, $h=g$.
\end{lemma}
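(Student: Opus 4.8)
The plan is to reduce the statement to showing that the continuous function $f := h - g \in H \subseteq C(\Gamma,\R)$ vanishes identically on $\Gamma$, using that it vanishes $\Pbb^\Theta$-almost everywhere. Recall from the construction in Section~\ref{sec:mainresults} that $\Pbb^\Theta = \mu$, so the hypothesis says precisely that $\mu(\{\theta \in \Gamma : f(\theta) \neq 0\}) = 0$. Since $f$ is continuous, the set $\{f \neq 0\}$ is open (hence Borel, so that the $\mu$-null statement is meaningful), and consequently its complement $A := f^{-1}(\{0\}) = \{\theta \in \Gamma : f(\theta) = 0\}$ is closed with $\mu(A) = 1$.

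Next I would invoke the support property \ref{ass:support:mu}, which states that $\Gamma = \bigcap\{B \subseteq \Gamma : B \text{ closed},\ \mu(B) = 1\}$. Since $A$ is a closed set of full $\mu$-measure, it belongs to the family over which the intersection is taken, so $\Gamma \subseteq A$. Combined with the trivial inclusion $A \subseteq \Gamma$, this gives $A = \Gamma$, i.e.\ $f(\theta) = 0$ for every $\theta \in \Gamma$. Hence $h = g$ pointwise on $\Gamma$, and therefore $h = g$ as elements of $H$.

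There is essentially no obstacle here; the proof is a short topological argument. The only points that require a moment of care are (i) noting that continuity of $f$ makes the exceptional set open, so the zero set is closed and measurable, and (ii) reading off assumption \ref{ass:support:mu} in the usable form ``every closed set of full $\mu$-measure contains $\Gamma$'', rather than in its literal intersection form.
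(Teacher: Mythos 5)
Your proof is correct and follows the same argument as the paper: define the agreement set $A=\{\theta\in\Gamma: h(\theta)=g(\theta)\}$, observe it is closed by continuity and has $\mu(A)=1$ since $\Pbb^\Theta=\mu$, and conclude $A=\Gamma$ from \ref{ass:support:mu}. The extra remarks about measurability and the reformulation of the support condition are fine but not needed beyond what the paper already does.
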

\begin{proof}
    Define $A:=\{\gamma \in \Gamma: h(\gamma)=g(\gamma)\}$. By assumption, we find $\Pbb(\Theta\in A) = \Pbb^\Theta(A)=1$. Since $h,g$ are continuous, $A$ is closed. Assumption \ref{ass:support:mu} and $\Pbb^\Theta=\mu$ imply $A=\Gamma$, that is, $h=g$.
\end{proof}
\begin{lemma}\label{l:inverse norm}
    Let $\widetilde Q\in L(H)$ be positive semidefinite, trace-class. Then, $\widetilde Q+\lambda$ is invertible for any $\lambda >0$ and $(\widetilde Q+\lambda)^{-1}$ has operator norm at most $\lambda^{-1}$.
\end{lemma}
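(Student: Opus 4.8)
The plan is to reduce everything to the elementary fact that a positive semidefinite (hence self-adjoint) bounded operator, shifted by a positive multiple of the identity, is bounded below, and that a self-adjoint operator bounded below is invertible with its inverse norm controlled by the lower bound. No spectral theory or trace-class machinery beyond self-adjointness is actually needed.

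First I would record the quadratic-form estimate: for every $h\in H$,
$$ \langle (\widetilde Q+\lambda)h, h\rangle = \langle \widetilde Q h, h\rangle + \lambda\|h\|^2 \geq \lambda\|h\|^2, $$
using positive semidefiniteness of $\widetilde Q$. Combining this with the Cauchy--Schwarz inequality gives $\|(\widetilde Q+\lambda)h\|\,\|h\| \geq \langle (\widetilde Q+\lambda)h, h\rangle \geq \lambda\|h\|^2$, hence
$$ \|(\widetilde Q+\lambda)h\| \geq \lambda\|h\|,\qquad h\in H. $$
In particular $\widetilde Q+\lambda$ is injective and, since it is bounded below, it has closed range.

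Next I would establish surjectivity. As $\widetilde Q$ is trace-class it is in particular self-adjoint (this is also part of the conclusion of \Cref{l:Q properties} for $Q$ and $Q_N$), so $\widetilde Q+\lambda$ is self-adjoint, and therefore its range is dense: the orthogonal complement of the range equals the kernel of the adjoint operator $\widetilde Q+\lambda$, which is trivial by the injectivity just shown. A subspace that is simultaneously dense and closed is all of $H$, so $\widetilde Q+\lambda$ is a bijection of $H$, and the bounded inverse theorem yields $(\widetilde Q+\lambda)^{-1}\in L(H)$. Finally, applying the displayed lower bound to $h=(\widetilde Q+\lambda)^{-1}g$ for arbitrary $g\in H$ gives $\|g\| \geq \lambda\|(\widetilde Q+\lambda)^{-1}g\|$, and hence $\|(\widetilde Q+\lambda)^{-1}\|_{\op}\leq \lambda^{-1}$.

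There is no genuine obstacle here; the one point deserving a word of care is the use of self-adjointness of $\widetilde Q$, since over a real Hilbert space positivity of the quadratic form alone does not force self-adjointness. In our setting this is harmless, because every operator to which the lemma is applied (namely $Q$ and $Q_N$, built from rank-one operators $k_\theta\otimes k_\theta$) is self-adjoint, and one may either read self-adjointness into the phrase ``positive semidefinite operator'' or simply add it to the hypotheses without any loss.
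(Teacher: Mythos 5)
Your proof is correct, but it takes a more elementary route than the paper. The paper's proof works through the spectral theorem: it shifts the spectral values $\lambda_i\geq 0$ of $\widetilde Q$ to $\lambda_i+\lambda>0$, invokes the spectral mapping theorem to identify the spectral values of the inverse as $(\lambda_i+\lambda)^{-1}$, and then uses that for a self-adjoint operator the operator norm equals the spectral radius. You instead prove coercivity, $\|(\widetilde Q+\lambda)h\|\geq\lambda\|h\|$, via the quadratic-form bound and Cauchy--Schwarz, deduce injectivity and closed range, get density of the range from self-adjointness (range-orthogonal-complement equals kernel of the adjoint), and read off the norm bound directly from the coercivity estimate. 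Your argument avoids spectral theory entirely and, as you note, does not even use the trace-class hypothesis --- only boundedness and self-adjointness --- so it is strictly more general; the paper's spectral argument is shorter once the spectral machinery is granted and meshes with the eigenbasis decomposition reused in \Cref{l:strong_Q_convergence}. One correction: your parenthetical claim that trace-class implies self-adjoint is false in general (a trace-class operator need not be self-adjoint), so that sentence should not be used as the justification; but your closing remark already supplies the right fix, namely that ``positive semidefinite'' is to be read as including self-adjointness, a convention the paper's own proof also relies on when it equates the operator norm with the supremum of the spectral values.
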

\begin{proof}
   Since $\widetilde Q$ is positive semidefinite with spectral values $\lambda_i\geq 0$, the operator $\widetilde Q+\lambda$ is positive definite with spectral values $\lambda_i+\lambda>0$. Hence, $\widetilde Q+\lambda$ is invertible and due to spectral mapping theorem \cite[2.1.10 Theorem]{palmer1994} its inverse has spectral values $(\lambda_i+\lambda)^{-1}$. Since the operator is self-adjoint and positive definite, the operator norm equals the supremum of the absolute values of the spectral values which is bounded by $\lambda^{-1}$, see 
   \cite[10.13 Theorem (b), 12.31 Theorem (a)]{rudin1991functional}.
\end{proof}
\begin{lemma}\label{l:strong_Q_convergence}
    Let $\widetilde Q\in L(H)$ be positive semidefinite, trace-class, and let $\Pi:H\rightarrow H$ denote the orthonormal projector on the kernel of $\widetilde Q$. Then, $\lambda (\widetilde Q+\lambda)^{-1}$ converges to $\Pi$ strongly as $\lambda\searrow 0$, i.e., for every $h\in H$ we have
     $$ \limitlambda\lambda (\widetilde Q+\lambda)^{-1}h = \Pi h.$$ 
    Moreover, if the kernel of $\widetilde Q$ is trivial, then
     $$ \limitlambda\lambda (\widetilde Q+\lambda)^{-1}h = \bf0.$$ 
\end{lemma}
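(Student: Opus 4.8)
The plan is to reduce the whole statement to the spectral decomposition of $\widetilde Q$. Since $\widetilde Q$ is positive semidefinite and trace-class, it is in particular a compact self-adjoint operator, so the spectral theorem for compact self-adjoint operators (as used already in the proof of Lemma~\ref{l:inverse norm}) yields an at most countable orthonormal system $(e_i)_i$ of eigenvectors with eigenvalues $\lambda_i>0$ that accumulate only at $0$, together with the orthogonal decomposition $H=\ker\widetilde Q\oplus\overline{\operatorname{span}}\{e_i\}$. Hence every $h\in H$ can be written as $h=\Pi h+\sum_i\langle h,e_i\rangle e_i$, and I would analyse the two summands separately.

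First I would handle the kernel part: from $\widetilde Q\Pi h=\mathbf 0$ we get $(\widetilde Q+\lambda)\Pi h=\lambda\Pi h$, so $\lambda(\widetilde Q+\lambda)^{-1}\Pi h=\Pi h$ for every $\lambda>0$; this contribution is already exactly $\Pi h$ with no limit needed. For the remaining part $g:=h-\Pi h=\sum_i\langle h,e_i\rangle e_i$ in the closed range, I would use that $(\widetilde Q+\lambda)^{-1}$ acts diagonally on the $e_i$ with eigenvalue $(\lambda_i+\lambda)^{-1}$ (Lemma~\ref{l:inverse norm} and the spectral mapping theorem), which gives
\[
\big\|\lambda(\widetilde Q+\lambda)^{-1}g\big\|^2=\sum_i\Big(\tfrac{\lambda}{\lambda_i+\lambda}\Big)^2\,|\langle h,e_i\rangle|^2 .
\]
For each fixed $i$ with $\lambda_i>0$ the summand tends to $0$ as $\lambda\searrow0$, and it is dominated by $|\langle h,e_i\rangle|^2$, which is summable with sum $\|g\|^2<\infty$. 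Dominated convergence for series then yields $\lambda(\widetilde Q+\lambda)^{-1}g\to\mathbf 0$, and combining with the kernel part gives $\lambda(\widetilde Q+\lambda)^{-1}h\to\Pi h$. The ``moreover'' claim is then just the special case $\ker\widetilde Q=\{\mathbf 0\}$, for which $\Pi=\mathbf 0$.

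The only step that requires genuine care is the interchange of the limit $\lambda\searrow0$ with the infinite sum. If one wants to avoid invoking dominated convergence, I would instead use an $\varepsilon$-splitting argument: given $\varepsilon>0$, choose $N$ with $\sum_{i>N}|\langle h,e_i\rangle|^2<\varepsilon$, bound the tail uniformly in $\lambda$ by $\varepsilon$ using $\tfrac{\lambda}{\lambda_i+\lambda}\le1$ (equivalently $\|\lambda(\widetilde Q+\lambda)^{-1}\|_{\op}\le1$, which is exactly Lemma~\ref{l:inverse norm} applied to $\lambda^{-1}\widetilde Q$), and then let $\lambda\searrow0$ in the finite head sum, where each of the finitely many factors $\tfrac{\lambda}{\lambda_i+\lambda}$ visibly goes to $0$. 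Everything beyond this is a routine computation, so I expect no further obstacles.
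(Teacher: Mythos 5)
Your proof is correct and takes essentially the same route as the paper: diagonalize $\widetilde Q$ via the spectral theorem for compact self-adjoint operators, observe that $\lambda(\widetilde Q+\lambda)^{-1}$ acts as $\lambda/(\rho_i+\lambda)$ on each eigenvector, and pass to the limit term by term. Your treatment is in fact slightly more careful than the paper's, which asserts the interchange of limit and sum without the dominated-convergence or $\varepsilon$-splitting justification you supply.
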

\begin{proof}
      There exists an orthonormal basis $(e_i)_{i\in I}$ for some index set $I$ such that $e_i$ is an eigenvector of $\widetilde Q$ with some eigenvalue $\rho_i\geq 0$ for any $i\in I$ \cite[p. 905, 4 Theorem]{dunford1988linear_spectral_theory}. We find that
       \begin{align*}
           \lambda (\widetilde Q+\lambda)^{-1}h &= \sum_{i\in I} \<h,e_i\> \lambda (\widetilde Q+\lambda)^{-1}e_i \\
            &= \sum_{i\in I} \<h,e_i\> \lambda (\rho_i+\lambda)^{-1}e_i.
       \end{align*}
       Note that the following applies for each summand  
        $$ \limitlambda\lambda (\rho_i+\lambda)^{-1}e_i = \Pi e_i = \begin{cases} 0 & \rho_i>0, \\ e_i & \rho_i=0, \end{cases}$$ with $\|\lambda (\rho_i+\lambda)^{-1}e_i\|\leq 1$. Consequently, we get
         $$\limitlambda\sum_{i\in I} \<h,e_i\> \lambda (\rho_i+\lambda)^{-1}e_i = \sum_{i\in I} \<h,e_i\> \Pi e_i = \Pi h. $$
\end{proof}
\begin{lemma}\label{l:minimizer}
    Let $\widetilde Q \in L(H)$ be positive semidefinite, $\lambda>0$, $\widetilde a\in H$ and $c\in\mathbb R$. Then,
     $$ \widetilde V:H\rightarrow \mathbb R,\quad \widetilde V(h):= \<\widetilde Qh ,h\> - 2\<\widetilde a,h\> + c + \lambda \|h\|^2 $$
    has a unique minimizer given by
     $$ \widetilde h_0 := (\widetilde Q+\lambda)^{-1}\widetilde a $$
    with objective value
     $$ \widetilde V(\widetilde h_0) = c - \<\widetilde a,\widetilde h_0\>. $$
\end{lemma}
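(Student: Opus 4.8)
The plan is to exploit that $\widetilde V$ is a strictly convex quadratic functional on the Hilbert space $H$ and to locate its minimizer by completing the square. First I would record that $\widetilde Q+\lambda$ is invertible with $\norm{(\widetilde Q+\lambda)^{-1}}_{\op}\le\lambda^{-1}$. In Lemma~\ref{l:inverse norm} this is stated for trace-class $\widetilde Q$, but the argument there only uses that a positive semidefinite bounded self-adjoint operator has spectrum contained in $[0,\infty)$, so $\widetilde Q+\lambda$ has spectrum in $[\lambda,\infty)$ and is therefore boundedly invertible; alternatively one notes the coercivity estimate $\<(\widetilde Q+\lambda)h,h\>\ge\lambda\norm{h}^2$, which makes $\widetilde Q+\lambda$ bounded below and, being self-adjoint, invertible with the claimed norm bound (Lax--Milgram). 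Thus $\widetilde h_0:=(\widetilde Q+\lambda)^{-1}\widetilde a$ is well defined.

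Next I would carry out the completion of the square. Using that $\widetilde Q+\lambda$ is self-adjoint and $(\widetilde Q+\lambda)\widetilde h_0=\widetilde a$, a direct expansion gives
\[
 \<(\widetilde Q+\lambda)(h-\widetilde h_0),\,h-\widetilde h_0\> = \<\widetilde Q h,h\> + \lambda\norm{h}^2 - 2\<\widetilde a,h\> + \<\widetilde a,\widetilde h_0\>,
\]
so that $\widetilde V(h) = \<(\widetilde Q+\lambda)(h-\widetilde h_0),\,h-\widetilde h_0\> + c - \<\widetilde a,\widetilde h_0\>$ for every $h\in H$. The coercivity estimate applied to $h-\widetilde h_0$ shows the first term is $\ge\lambda\norm{h-\widetilde h_0}^2\ge 0$, with equality if and only if $h=\widetilde h_0$. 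Hence $\widetilde h_0$ is the unique minimizer, and its objective value is the remaining constant $\widetilde V(\widetilde h_0)=c-\<\widetilde a,\widetilde h_0\>$ — which one also sees by substituting $h=\widetilde h_0$ directly into $\widetilde V$ and using $\lambda\norm{\widetilde h_0}^2+\<\widetilde Q\widetilde h_0,\widetilde h_0\>=\<(\widetilde Q+\lambda)\widetilde h_0,\widetilde h_0\>=\<\widetilde a,\widetilde h_0\>$.

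As an equivalent route I could instead compute the Fréchet gradient $\nabla\widetilde V(h)=2(\widetilde Q+\lambda)h-2\widetilde a$, observe that $\widetilde V$ is convex since $\widetilde Q+\lambda$ is positive semidefinite, and conclude that the unique critical point $\widetilde h_0=(\widetilde Q+\lambda)^{-1}\widetilde a$ is the global minimum; the value is then obtained as above. There is no genuine obstacle here — the statement is a standard Hilbert-space ridge/least-squares fact — and the only point meriting a sentence of care is the invertibility of $\widetilde Q+\lambda$ under the weaker hypothesis of this lemma (positive semidefinite, not necessarily trace-class), which the coercivity bound handles. Everything else is routine inner-product algebra.
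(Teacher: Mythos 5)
Your proof is correct. Your secondary route --- computing $\nabla\widetilde V(h)=2(\widetilde Q+\lambda)h-2\widetilde a$, invoking convexity, and solving the first-order condition --- is exactly the paper's argument, so in that sense you have reproduced it. Your primary route, completing the square to write $\widetilde V(h)=\<(\widetilde Q+\lambda)(h-\widetilde h_0),h-\widetilde h_0\>+c-\<\widetilde a,\widetilde h_0\>$, is a genuinely different (and arguably cleaner) presentation: it delivers existence, uniqueness, and the objective value in a single identity, with uniqueness following from the coercivity bound $\<(\widetilde Q+\lambda)g,g\>\ge\lambda\|g\|^2$ rather than from the general fact that critical points of convex differentiable functionals are global minimizers. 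You also correctly flag a small mismatch that the paper glosses over: the invertibility of $\widetilde Q+\lambda$ is cited from \Cref{l:inverse norm}, whose statement assumes $\widetilde Q$ trace-class, whereas \Cref{l:minimizer} assumes only positive semidefiniteness. Your observation that the spectral (or Lax--Milgram/coercivity) argument needs no trace-class hypothesis closes that gap cleanly; this is worth a sentence in the paper as well.
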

\begin{proof}
   $\widetilde V$ is convex and continuously differentiable. Consequently, $h\in H$ is a minimizer for $\widetilde V$ if and only if $\nabla \widetilde V(h)=\bf0$. Since $\nabla \widetilde V(h) = 2(\widetilde Q+\lambda)h-2\widetilde a$ we find that $h\in H$ is a minimizer if and only if $(\widetilde Q+\lambda)h=\widetilde a$. \Cref{l:inverse norm} yields invertibility of $\widetilde Q+\lambda$ and, thus, that the claimed element is the minimizer. Moreover, inserting the minimizer $\widetilde h_0$ into $\widetilde V$ implies the last equality.
\end{proof}
\begin{lemma}\label{l:difference of inverse}
    Let $\widetilde Q_1,\widetilde Q_2\in L(H)$ be positive semidefinite and $\lambda >0$. Then, we have
     $$ \|(\widetilde Q_1+\lambda)^{-1}-(\widetilde Q_2+\lambda)^{-1}\|_{\op} \leq \frac{\|\widetilde  Q_1-\widetilde Q_2\|_{\op}}{\lambda^2}.$$
\end{lemma}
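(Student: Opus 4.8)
The plan is to prove the estimate via the resolvent identity together with the uniform bound on the inverses already established in \Cref{l:inverse norm}. Write $A_i := \widetilde Q_i + \lambda$ for $i = 1, 2$. First I would record that each $A_i$ is invertible with $\|A_i^{-1}\|_{\op} \le \lambda^{-1}$: since $\widetilde Q_i$ is self-adjoint and positive semidefinite, $\langle A_i h, h\rangle = \langle \widetilde Q_i h, h\rangle + \lambda\|h\|^2 \ge \lambda \|h\|^2$ for all $h\in H$, so $A_i$ is bounded below, hence injective with closed range, and — being self-adjoint — has dense range; therefore $A_i$ is bijective with $A_i^{-1}\in L(H)$, and the lower bound $\langle A_i h, h\rangle \ge \lambda\|h\|^2$ gives $\|A_i^{-1}\|_{\op} \le \lambda^{-1}$. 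This is exactly the norm bound of \Cref{l:inverse norm}; the trace-class hypothesis stated there is not actually needed for this particular estimate, so I would either invoke \Cref{l:inverse norm} in the generality required here or simply reprove the one line above.

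Next I would use the purely algebraic resolvent identity
$$ A_1^{-1} - A_2^{-1} = A_1^{-1}(A_2 - A_1) A_2^{-1}, $$
valid for any pair of invertible bounded operators and checked by composing both sides with $A_1$ on the left and $A_2$ on the right. Since $A_2 - A_1 = \widetilde Q_2 - \widetilde Q_1$, this reads
$$ (\widetilde Q_1 + \lambda)^{-1} - (\widetilde Q_2 + \lambda)^{-1} = (\widetilde Q_1 + \lambda)^{-1}\,(\widetilde Q_2 - \widetilde Q_1)\,(\widetilde Q_2 + \lambda)^{-1}. $$
Applying submultiplicativity of the operator norm together with the two bounds $\|(\widetilde Q_i + \lambda)^{-1}\|_{\op} \le \lambda^{-1}$ then yields
$$ \|(\widetilde Q_1 + \lambda)^{-1} - (\widetilde Q_2 + \lambda)^{-1}\|_{\op} \le \lambda^{-1}\,\|\widetilde Q_2 - \widetilde Q_1\|_{\op}\,\lambda^{-1} = \frac{\|\widetilde Q_1 - \widetilde Q_2\|_{\op}}{\lambda^2}, $$
where in the last step I use $\|\widetilde Q_2 - \widetilde Q_1\|_{\op} = \|\widetilde Q_1 - \widetilde Q_2\|_{\op}$. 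This is the assertion.

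There is essentially no hard step here: the whole argument is the resolvent identity plus one norm estimate. The only point that deserves a little care is that \Cref{l:difference of inverse} is stated for arbitrary positive semidefinite $\widetilde Q_1,\widetilde Q_2\in L(H)$, whereas \Cref{l:inverse norm} was phrased under a trace-class hypothesis; so I would make sure the invertibility of $\widetilde Q_i+\lambda$ and the bound $\lambda^{-1}$ on its inverse are derived purely from self-adjointness and positivity, as sketched in the first paragraph, so that no superfluous assumption is quietly imported.
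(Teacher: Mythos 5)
Your proof is correct and follows essentially the same route as the paper: the paper's displayed computation is precisely the resolvent identity $A_1^{-1}-A_2^{-1}=A_1^{-1}(A_2-A_1)A_2^{-1}$, followed by the bound $\|(\widetilde Q_i+\lambda)^{-1}\|_{\op}\le\lambda^{-1}$ from \Cref{l:inverse norm}. Your side remark that the trace-class hypothesis in \Cref{l:inverse norm} is superfluous for this norm bound is accurate and worth noting, but it does not change the argument.
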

\begin{proof}
    For $D:=\widetilde Q_2-\widetilde Q_1$ we derive
    \begin{align*}
        (\widetilde Q_1+\lambda)^{-1}-(\widetilde Q_2+\lambda)^{-1} &= (\widetilde Q_1+\lambda)^{-1}\left( 1 - (\widetilde Q_1+\lambda)(\widetilde Q_1+D+\lambda)^{-1}\right) \\
           &= (\widetilde Q_1+\lambda)^{-1}D(\widetilde Q_2+\lambda)^{-1}.
    \end{align*}
    Lemma \ref{l:inverse norm} yields the claim.
\end{proof}

\begin{lemma}
    Let $h\in H$. Then, $h\otimes h$ is positive semidefinite, trace-class and Hilbert Schmidt. We also have
     $$ \|h\otimes h\|_{\op} = \mathrm{Tr}(h\otimes h) = \|h\otimes h\|_{HS} = \|h\|^2. $$
\end{lemma}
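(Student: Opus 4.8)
The plan is to treat $h\otimes h$ as a rank-one operator and verify each claim by direct computation, first dispatching the trivial case $h=\mathbf0$ (where $h\otimes h$ is the zero operator and every quantity in the statement equals $0$). So I would assume $h\neq\mathbf0$ and set $e:=h/\|h\|$.

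First I would record self-adjointness and positivity. For $f,g\in H$ we have
\[
\langle (h\otimes h)f,g\rangle = \langle h\langle h,f\rangle,g\rangle = \langle h,f\rangle\langle h,g\rangle = \langle f,(h\otimes h)g\rangle,
\]
so $h\otimes h$ is self-adjoint, and taking $g=f$ gives $\langle (h\otimes h)f,f\rangle = \langle h,f\rangle^2\geq 0$, hence $h\otimes h$ is positive semidefinite. The operator norm is then handled by Cauchy--Schwarz: $\|(h\otimes h)f\| = |\langle h,f\rangle|\,\|h\| \leq \|h\|^2\|f\|$, which gives $\|h\otimes h\|_{\op}\leq\|h\|^2$, while the choice $f=e$ yields $\|(h\otimes h)e\| = \|h\|^2$, so in fact $\|h\otimes h\|_{\op}=\|h\|^2$.

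Next I would compute the trace and the Hilbert--Schmidt norm using an arbitrary orthonormal basis $(e_i)_{i\in I}$ of $H$. Since $h\otimes h$ is positive semidefinite, its trace is well defined (possibly infinite) and basis-independent, and Parseval's identity gives
\[
\mathrm{Tr}(h\otimes h) = \sum_{i\in I}\langle (h\otimes h)e_i,e_i\rangle = \sum_{i\in I}\langle h,e_i\rangle^2 = \|h\|^2<\infty,
\]
so $h\otimes h$ is trace-class with trace $\|h\|^2$. Similarly,
\[
\|h\otimes h\|_{HS}^2 = \sum_{i\in I}\|(h\otimes h)e_i\|^2 = \|h\|^2\sum_{i\in I}\langle h,e_i\rangle^2 = \|h\|^4,
\]
so $\|h\otimes h\|_{HS}=\|h\|^2$, and in particular $h\otimes h$ is Hilbert--Schmidt. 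Combining the three computations yields $\|h\otimes h\|_{\op}=\mathrm{Tr}(h\otimes h)=\|h\otimes h\|_{HS}=\|h\|^2$.

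There is essentially no obstacle here; the only points that deserve a word of care are the separate handling of $h=\mathbf0$ and the appeal to basis-independence of the trace for positive operators, both of which are standard. One could alternatively shorten the argument via the general inequalities $\|T\|_{\op}\leq\|T\|_{HS}\leq\mathrm{Tr}(T)$ valid for positive semidefinite trace-class $T$, but since all three quantities can be evaluated directly in a single line each, I would just do that.
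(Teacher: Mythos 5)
Your proof is correct and follows essentially the same route as the paper's: treat $h\otimes h$ as a rank-one operator, handle $h=\mathbf{0}$ separately, and compute the trace, Hilbert--Schmidt norm, and operator norm directly via an orthonormal basis (the paper picks a basis containing $h/\|h\|$ so the sums collapse to one term, whereas you invoke Parseval in an arbitrary basis — an immaterial difference). Your explicit verification of self-adjointness and positive semidefiniteness is a small addition the paper leaves implicit.
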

\begin{proof} 
    If $h=0$, then the equality is trivial. Thus, we may assume that $h\neq 0$.

    There is an orthonormal basis $(b_n)_{n\in\mathbb N}$ such that $b_1:=\frac{h}{\|h\|}$. We find that
     $$ \mathrm{Tr}(h\otimes h) = \sum_{n\in\mathbb N}\<(h\otimes h)b_n,b_n\>= \sum_{n\in\mathbb N}\<h,b_n\>^2 = \<h,b_1\>^2=\|h\|^2. $$

    Since $(h\otimes h)^*=h\otimes h$ we get
     $$ \|h\otimes h\|_{HS}^2 = \mathrm{Tr}(\|h\|^2(h\otimes h))=\|h\|^4$$
     and, hence,
      $$ \|h\otimes h\|_{HS} = \|h\|^2. $$

   For the last equality, we have
    $$ \|h\otimes h\|_{\op} \leq \|h\|^2 $$
    and 
     $$ \|h\otimes h\|_{\op} \geq \|(h\otimes h)b_1\|=\|h\|^2$$
     as required.
\end{proof}

\begin{lemma}\label{l:Q properties}
    The random operators $Q_N:=\frac1N \sum_{k=1}^N k_{\Theta_k}\otimes k_{\Theta_k}$  for $N\in\mathbb N$ and the operator $Q:=\E[ k_\Theta\otimes k_\Theta] = \E[Q_1]$ are positive semidefinite, trace-class and Hilbert Schmidt. Moreover, $\mathrm{Tr}(Q),\|Q\|_{\op},\|Q\|_{HS}$ are all bounded by $\E[\|k_\Theta\|^2]$. If $f,g\in H$, then
     $$ \<Qf,g\> = \E[f(\Theta)g(\Theta)] = \<f,g\>_{\Lc^2(\mu)}. $$
    Furthermore, if the support of $\mu$ is $\Gamma$, then $Q$ is positive definite.
\end{lemma}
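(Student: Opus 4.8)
The plan is to realise $Q$ as a Bochner integral of the rank-one operators $k_\theta\otimes k_\theta$ and to push each of the desired properties through that integral, so that everything reduces to the identity $\<(k_\theta\otimes k_\theta)f,g\>=\<k_\theta,f\>\<k_\theta,g\>=f(\theta)g(\theta)$ together with $\Pbb^\Theta=\mu$. First I would record the basic estimate: since $\|k_\theta\|^2=\<k_\theta,k_\theta\>=k(\theta,\theta)$, we have $\E[\|k_\Theta\|^2]=\int_\Gamma k(\theta,\theta)\,\mu(d\theta)$, which is finite by the Cauchy--Schwarz inequality applied to $k(\cdot,\cdot)$ and the constant $1$ in $\Lc^2(\mu)$, using \ref{ass:k:finite:secondmoment}. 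The preceding lemma then supplies, for each fixed $\theta$, that $k_\theta\otimes k_\theta$ is positive semidefinite, trace-class and Hilbert--Schmidt with $\|k_\theta\otimes k_\theta\|_{\op}=\mathrm{Tr}(k_\theta\otimes k_\theta)=\|k_\theta\otimes k_\theta\|_{HS}=k(\theta,\theta)$.

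For $Q_N$ the argument is short: it is a finite convex combination of positive semidefinite trace-class Hilbert--Schmidt operators, and these three classes are closed under addition and non-negative scaling, so $Q_N$ is again positive semidefinite, trace-class and Hilbert--Schmidt; the triangle inequality in each of the norms $\|\cdot\|_{\op}$, $\|\cdot\|_{S_1}$, $\|\cdot\|_{HS}$ gives the bound $\frac1N\sum_{k=1}^N\|k_{\Theta_k}\|^2=\frac1N\sum_{k=1}^N k(\Theta_k,\Theta_k)$, the empirical counterpart of $\E[\|k_\Theta\|^2]$.

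For $Q=\E[k_\Theta\otimes k_\Theta]$ I would view $\theta\mapsto k_\theta\otimes k_\theta$ as a map into the separable Banach space $S_1(H)$ of trace-class operators. It is strongly measurable, since $\theta\mapsto k_\theta\in H$ is strongly measurable (weak measurability because $\theta\mapsto\<k_\theta,h\>=h(\theta)$ is measurable for every $h\in H$; essential separable-valuedness because $H$ is separable; then Pettis' theorem) and $v\mapsto v\otimes v$ is continuous from $H$ into $S_1(H)$. Moreover $\E[\|k_\Theta\otimes k_\Theta\|_{S_1}]=\E[\|k_\Theta\|^2]<\infty$, so the Bochner integral $Q$ exists in $S_1(H)\subset S_2(H)\subset L(H)$; in particular $Q$ is trace-class and Hilbert--Schmidt (and $Q=\E[Q_1]$ by linearity). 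The estimates follow from $\|\E[\,\cdot\,]\|\le\E[\|\cdot\|]$ in each of the three norms, with $\mathrm{Tr}(Q)=\E[\mathrm{Tr}(k_\Theta\otimes k_\Theta)]=\E[\|k_\Theta\|^2]$ because $\mathrm{Tr}$ is a continuous linear functional on $S_1(H)$ and commutes with the Bochner integral. Likewise, for $f,g\in H$ the map $A\mapsto\<Af,g\>$ is continuous and linear on $L(H)$, hence
$$\<Qf,g\>=\E\big[\<(k_\Theta\otimes k_\Theta)f,g\>\big]=\E\big[\<k_\Theta,f\>\<k_\Theta,g\>\big]=\E[f(\Theta)g(\Theta)]=\<f,g\>_{\Lc^2(\mu)},$$
the last equality using $\Pbb^\Theta=\mu$. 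Taking $g=f$ yields $\<Qf,f\>=\E[f(\Theta)^2]\ge0$, so $Q$ is positive semidefinite; and if the support of $\mu$ is $\Gamma$ and $h\neq0$, then $\<Qh,h\>=\E[h(\Theta)^2]$ cannot vanish, for otherwise $h(\Theta)=0$ $\Pbb^\Theta$-a.s., hence $h=0$ $\Pbb^\Theta$-a.s., and \Cref{l:as is equal} would force $h=0$; thus $\<Qh,h\>>0$ and $Q$ is positive definite.

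The closure properties of the operator ideals and the elementary norm bounds are routine. The step requiring the most care, and the main obstacle, is making the Bochner-integral definition of $Q$ rigorous: establishing strong measurability of $\theta\mapsto k_\theta\otimes k_\theta$ (which is exactly where separability of $H$ is used) and justifying the interchange of the expectation with $\mathrm{Tr}$ and with the bilinear forms $A\mapsto\<Af,g\>$. Once this is in place, every assertion of the lemma is an immediate consequence of $\<(k_\theta\otimes k_\theta)f,g\>=f(\theta)g(\theta)$ and $\Pbb^\Theta=\mu$.
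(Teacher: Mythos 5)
Your proof is correct and follows essentially the same route as the paper's: compute $\mathrm{Tr}(Q)=\E[\|k_\Theta\|^2]$, deduce trace-class and Hilbert--Schmidt, and obtain positive (semi)definiteness from $\<Qf,f\>=\E[f(\Theta)^2]$ together with the support assumption and \Cref{l:as is equal}. You are more explicit than the paper about the Bochner-integral construction of $Q$ and the strong measurability of $\theta\mapsto k_\theta\otimes k_\theta$ (details the paper leaves implicit), and you bound $\|Q\|_{HS}$ by the triangle inequality rather than via $\|Q\|_{HS}^2\leq\|Q\|_{\op}\mathrm{Tr}(Q)$, but these are cosmetic differences.
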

\begin{proof}
    $Q_N$ has obviously all the given properties. Note that
     $$ \|Q\|_{\op} \leq \E[ \|k_\Theta\|^2 ] < \infty $$
     which shows that $Q$ is well defined and positive semidefinite.
      We also know
     $$ \mathrm{Tr}(Q_1) = \| k_\Theta\|^2 $$
     and, consequently,  we find
      $$ \mathrm{Tr}(Q) = \E[\|k_\Theta\|^2 ] < \infty $$
      which yields that $Q$ is trace-class and, hence, Hilbert Schmidt. Recall that for a positive semidefinite operator $Q$ which is trace-class, we get the following inequality
 $$ \| Q\|_{HS}^2 \leq \|Q\|_{\op} \mathrm{Tr}(Q) $$
 and, thus, we obtained the stated upper bound for the trace-norm, the operator norm and the Hilbert Schmidt norm of $Q$.

    Now, assume that the support of $\mu$ is $\Gamma$ and let $h\in H$ with $Qh = \bf0$. We have
     \begin{align*}
         0 &= \< Qh, h\> = \E[ h(\Theta)^2 ].
     \end{align*}
     Consequently, $h(\Theta) =  \bf0$ $\Pbb^\Theta$-a.s. Since the support of $\mu=\Pbb^\Theta$ is $\Gamma$ we see that $h=\bf0$ on a dense set. \Cref{l:as is equal} implies $h=\bf0$ and hence the kernel of $Q$ is trivial.
\end{proof}
\begin{lemma}\label{l:f estimate}
    Let $\lambda \geq 0$ and define $f(x) := \frac{\lambda \sqrt{x}}{\lambda + x}$ for any $x\geq 0$. Then, $$f(x)\leq \sqrt{\lambda}/2$$ for any $x\geq 0$.
\end{lemma}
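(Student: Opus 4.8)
\textbf{Proof plan for Lemma \ref{l:f estimate}.}
The plan is to reduce the inequality to the elementary AM--GM bound $\lambda + x \ge 2\sqrt{\lambda x}$, after disposing of the degenerate case. First I would observe that if $\lambda = 0$ then $f(x) = 0$ for every $x > 0$ (and we read $f(0) = 0$), so the claimed bound holds trivially; hence it suffices to treat $\lambda > 0$.

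For $\lambda > 0$ the inequality $f(x) \le \sqrt{\lambda}/2$ is equivalent, after multiplying through by the positive quantity $2(\lambda + x)/\sqrt{\lambda}$, to
\[
 2\sqrt{\lambda}\,\sqrt{x} \;\le\; \lambda + x .
\]
This is exactly the arithmetic--geometric mean inequality applied to the nonnegative numbers $\lambda$ and $x$, namely $\lambda + x \ge 2\sqrt{\lambda x}$, so the claim follows. An equivalent route, if one prefers an explicit optimization argument, is to substitute $t := \sqrt{x} \ge 0$ and study $g(t) := \lambda t/(\lambda + t^2)$; then $g'(t) = \lambda(\lambda - t^2)/(\lambda + t^2)^2$, which is positive on $[0,\sqrt{\lambda})$ and negative on $(\sqrt{\lambda},\infty)$, so $g$ attains its maximum at $t = \sqrt{\lambda}$, where $g(\sqrt{\lambda}) = \lambda\sqrt{\lambda}/(2\lambda) = \sqrt{\lambda}/2$. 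Either way one concludes $f(x) = g(\sqrt{x}) \le \sqrt{\lambda}/2$ for all $x \ge 0$.

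There is no real obstacle here; the only point requiring a moment's care is the boundary behaviour at $x = 0$ together with the limiting case $\lambda = 0$, where the quotient is nominally $0/0$ and must be assigned the value $0$ (consistent with the use of $f$ elsewhere, e.g.\ in the bound $\|h_{\lambda_N} - h_0\|_{\mathcal L^2(\mu)} \le \sqrt{\lambda_N}\,\|h_0\|/2$ obtained in the proof of \Cref{t:approximation}). Once that convention is fixed, the two lines above complete the argument.
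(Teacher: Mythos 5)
Your argument is correct and matches the paper's proof, which likewise dispatches the boundary case $x=0$ (and implicitly $\lambda=0$) and then applies the AM--GM bound $\sqrt{x\lambda}\leq (x+\lambda)/2$ to conclude $f(x)\leq\sqrt{\lambda}/2$. The alternative calculus argument you sketch is a harmless extra; no changes needed.
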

\begin{proof}
    Due to $f(0)=0\leq \sqrt{\lambda}/2$, we may assume $x>0$. Note that $0<\sqrt{x\lambda} \leq (x+\lambda)/2$ and, thus, we receive
     $$ f(x) \leq \sqrt{\lambda}/2$$
     as required.
\end{proof}
\begin{proposition}\label{p:no ridge estimate}
    Let $\widetilde Q\in L(H)$ be positive semidefinite. We define the semi-norm $\|f\|_{\widetilde Q} := \|\widetilde Q^{1/2}f\|$ for any $f\in H$. Then, for any $\lambda > 0$, $h\in H$ we get
     $$ \| \lambda (\widetilde Q+\lambda)^{-1}h\|_{\widetilde Q} \leq \sqrt{\lambda} \|h\|/2.$$
    Furthermore, if $\widetilde Q:=Q$ is the operator from Lemma \ref{l:Q properties}, then for any $\lambda > 0$, $h\in H$ we have
     $$ \| \lambda (Q+\lambda)^{-1}h\|_{\Lc^2(\mu)} \leq \sqrt{\lambda} \|h\|/2.$$
\end{proposition}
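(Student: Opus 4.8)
The plan is to realise $\widetilde Q^{1/2}\,\lambda(\widetilde Q+\lambda)^{-1}$ through the functional calculus of the bounded, positive semidefinite, self-adjoint operator $\widetilde Q$ and to reduce the inequality to the pointwise estimate of \Cref{l:f estimate}. First I would record the elementary facts that for $\lambda>0$ the operator $\widetilde Q+\lambda$ is invertible (its spectrum lies in $[\lambda,\infty)$) and that $\widetilde Q^{1/2}$ exists, is positive semidefinite and self-adjoint, and satisfies $(\widetilde Q^{1/2})^2=\widetilde Q$; consequently $\|f\|_{\widetilde Q}^2=\langle \widetilde Q^{1/2}f,\widetilde Q^{1/2}f\rangle=\langle \widetilde Qf,f\rangle$, and in particular
\[
 \|\lambda(\widetilde Q+\lambda)^{-1}h\|_{\widetilde Q}=\|\widetilde Q^{1/2}\,\lambda(\widetilde Q+\lambda)^{-1}h\|.
\]

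Next, with $f(x)=\frac{\lambda\sqrt x}{\lambda+x}$ as in \Cref{l:f estimate}, I would write $f=g\cdot r$ where $g(x)=\sqrt x$ and $r(x)=\frac{\lambda}{\lambda+x}$, both continuous on $[0,\infty)\supseteq\sigma(\widetilde Q)$. Since the continuous functional calculus is a $*$-homomorphism, $g(\widetilde Q)=\widetilde Q^{1/2}$, $r(\widetilde Q)=\lambda(\widetilde Q+\lambda)^{-1}$, and hence $f(\widetilde Q)=\widetilde Q^{1/2}\,\lambda(\widetilde Q+\lambda)^{-1}$. Being a function of a self-adjoint operator, $f(\widetilde Q)$ is self-adjoint, so its operator norm equals $\sup_{x\in\sigma(\widetilde Q)}|f(x)|$; by \Cref{l:f estimate} this is at most $\sup_{x\ge 0}f(x)\le\sqrt\lambda/2$. (Here I would invoke the same spectral-norm identification already used in the proof of \Cref{l:inverse norm}.) Combining, $\|\lambda(\widetilde Q+\lambda)^{-1}h\|_{\widetilde Q}=\|f(\widetilde Q)h\|\le\|f(\widetilde Q)\|_{\op}\,\|h\|\le\sqrt\lambda\,\|h\|/2$, which proves the first assertion.

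For the second assertion I take $\widetilde Q:=Q$, the operator from \Cref{l:Q properties}. That lemma gives $\langle Qf,g\rangle=\langle f,g\rangle_{\Lc^2(\mu)}$ for all $f,g\in H$, hence $\|f\|_{\Lc^2(\mu)}^2=\langle Qf,f\rangle=\|f\|_Q^2$, i.e.\ the two (semi-)norms coincide on $H$. Since $\lambda(Q+\lambda)^{-1}h\in H$, the first part applied with $\widetilde Q=Q$ yields $\|\lambda(Q+\lambda)^{-1}h\|_{\Lc^2(\mu)}=\|\lambda(Q+\lambda)^{-1}h\|_Q\le\sqrt\lambda\,\|h\|/2$.

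I expect the only delicate point to be the precise appeal to the functional calculus for a merely bounded (not trace-class) $\widetilde Q$, namely that $\widetilde Q^{1/2}\,\lambda(\widetilde Q+\lambda)^{-1}=f(\widetilde Q)$ and $\|f(\widetilde Q)\|_{\op}=\sup_{\sigma(\widetilde Q)}|f|$. A self-contained route that sidesteps this is to put $g:=(\widetilde Q+\lambda)^{-1}h$, so that $\|\lambda(\widetilde Q+\lambda)^{-1}h\|_{\widetilde Q}^2=\lambda^2\langle \widetilde Q g,g\rangle$ and $\tfrac\lambda4\|h\|^2=\tfrac\lambda4\langle(\widetilde Q+\lambda)^2 g,g\rangle$; the claim then follows from the operator identity $\tfrac\lambda4(\widetilde Q+\lambda)^2-\lambda^2\widetilde Q=\tfrac\lambda4(\widetilde Q-\lambda)^2$, whose right-hand side is positive semidefinite because $\widetilde Q-\lambda$ is self-adjoint.
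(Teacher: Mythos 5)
Your proof is correct and follows essentially the same route as the paper: bound $\|\widetilde Q^{1/2}\,\lambda(\widetilde Q+\lambda)^{-1}\|_{\op}$ by $\sup_{x\ge 0}\frac{\lambda\sqrt x}{\lambda+x}\le\sqrt\lambda/2$ via the spectral mapping theorem and \Cref{l:f estimate}, then identify $\|\cdot\|_{\Lc^2(\mu)}$ with $\|\cdot\|_Q$ through \Cref{l:Q properties}. Your closing remark, reducing the claim to positivity of $\tfrac{\lambda}{4}(\widetilde Q-\lambda)^2$, is a correct and pleasantly elementary alternative that avoids the functional calculus entirely, though the paper does not need it.
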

\begin{proof}
    We consider $f(x) := \frac{\lambda \sqrt{x}}{x+\lambda}$ and 
     $$f(\widetilde Q):= \lambda \widetilde Q^{1/2}(\widetilde Q+\lambda)^{-1}.$$
    Lemma \ref{l:f estimate} yields $f(x)\leq \sqrt{\lambda}/2$ for any $x\geq 0$. The spectral mapping theorem implies that $f(\widetilde Q)$ is bounded linearly with 
     $$ \|f(\widetilde Q)\|_{L(H)} \leq \sup\big\{ |f(x)| : 0\leq x \leq \|\widetilde Q\|_{L(H)}\big\} \leq \sqrt{\lambda}/2. $$
    We find
    \begin{align*}
        \| \lambda (\widetilde Q+\lambda)^{-1}h\|_{\widetilde Q} &= \| \lambda \widetilde Q^{1/2} (\widetilde Q+\lambda)^{-1}h\| \\
         &\leq \| f(\widetilde Q) \|_{L(H)} \|h\| \\
         &\leq \sqrt{\lambda} \|h\|/2
    \end{align*}
    which shows the first displayed inequality.
    
    Now, let $\widetilde Q$ be the operator from Lemma \ref{l:Q properties}, i.e., $\widetilde Q:=Q=\E[k_\Theta\otimes k_\Theta]$. Then, we have $\<Qf,g\>=\E[f(\Theta)g(\Theta)]$ for any $f,g\in H$. For $f\in H$ we compute
  \begin{align*}
      \| f \|^2_{\Lc^2(\mu)} &= \E[ f(\Theta)^2 ] \\
       &= \<Qf, f\> \\
       &= \<Q^{1/2}f,Q^{1/2}f\>  \\
       &= \|f\|_Q^2
  \end{align*}
  and the inequality results from the first part.  
\end{proof}
The following proposition shows that under some condition the optimizer for a non-degenerate regression problem, if exists, is obtained as the limit of the optimizers for the problem with an additional ridge term. 

\begin{proposition}\label{p:h to h0}
    Let $\widetilde Q\in L(H)$ be positive definite and trace-class, $\widetilde a\in H$ and $c\in\mathbb R$. Assume that $\widetilde h_0\in H$ is a minimizer of
     $$ \widetilde V:H\rightarrow \mathbb R,\quad \widetilde V(h):= \<\widetilde Qh ,h\> - 2\<\widetilde a,h\> + c. $$
    and $\widetilde h_\lambda$ is the minimizer of $\widetilde V_\lambda(h):=\widetilde V(h)+\lambda\|h\|^2$ for any $\lambda >0$ as given in Lemma \ref{l:minimizer}. Then, $$\limitlambda \widetilde h_\lambda = \widetilde h_0.$$
    Furthermore, if $\widetilde Q$ is the operator from Lemma \ref{l:Q properties}, we have
     $$ \|\widetilde h_\lambda - \widetilde h_0\|_{\Lc^2(\mu)} \leq \sqrt{\lambda} \|\widetilde h_0\|/2.$$
\end{proposition}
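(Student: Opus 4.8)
The plan is to reduce both assertions to the single algebraic identity
$$ \widetilde h_\lambda - \widetilde h_0 = -\lambda(\widetilde Q + \lambda)^{-1}\widetilde h_0 $$
and then invoke the lemmas already established. First I would record that $\widetilde V$ is convex and continuously differentiable with $\nabla \widetilde V(h) = 2(\widetilde Q h - \widetilde a)$; since $\widetilde h_0$ is assumed to minimize $\widetilde V$, the first-order condition gives the normal equation $\widetilde Q \widetilde h_0 = \widetilde a$. Feeding this into the closed form $\widetilde h_\lambda = (\widetilde Q + \lambda)^{-1}\widetilde a$ from Lemma \ref{l:minimizer} and writing $\widetilde Q = (\widetilde Q + \lambda) - \lambda$ produces the displayed identity.

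For the convergence claim I would then apply Lemma \ref{l:strong_Q_convergence}. The operator $\widetilde Q$ is trace-class and positive definite, hence has trivial kernel, so that lemma gives $\lambda(\widetilde Q + \lambda)^{-1}h \to \mathbf{0}$ as $\lambda \searrow 0$ for every $h \in H$; taking $h = \widetilde h_0$ and using the identity above yields $\widetilde h_\lambda \to \widetilde h_0$ in $H$.

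For the quantitative bound I would specialize to $\widetilde Q = Q$, the operator of Lemma \ref{l:Q properties}. The computation in the proof of Proposition \ref{p:no ridge estimate} identifies $\|f\|_{\mathcal L^2(\mu)}$ with the $Q$-seminorm $\|Q^{1/2}f\|$ for $f \in H$, and that proposition already bounds $\|\lambda(Q+\lambda)^{-1}h\|_{\mathcal L^2(\mu)} \le \sqrt{\lambda}\|h\|/2$. Applying this with $h = \widetilde h_0$ to the identity above immediately gives $\|\widetilde h_\lambda - \widetilde h_0\|_{\mathcal L^2(\mu)} \le \sqrt{\lambda}\|\widetilde h_0\|/2$.

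Since Lemmas \ref{l:minimizer} and \ref{l:strong_Q_convergence} and Proposition \ref{p:no ridge estimate} do the real work, the argument is short. The one step needing a word of justification is the passage from ``$\widetilde h_0$ minimizes $\widetilde V$'' to the normal equation $\widetilde Q \widetilde h_0 = \widetilde a$: this uses that a convex $C^1$ functional on a Hilbert space attains its infimum exactly at the zeros of its gradient, together with the fact that $\nabla \widetilde V$ is affine. No uniqueness of $\widetilde h_0$ is needed for the argument, although it would follow from positive definiteness of $\widetilde Q$ should one want it.
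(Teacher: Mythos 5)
Your argument is correct and follows essentially the same route as the paper's proof: the normal equation $\widetilde Q\widetilde h_0=\widetilde a$ from the first-order condition, the identity $\widetilde h_\lambda=\widetilde h_0-\lambda(\widetilde Q+\lambda)^{-1}\widetilde h_0$, strong convergence via Lemma \ref{l:strong_Q_convergence} (using that positive definiteness gives a trivial kernel), and the quantitative bound from Proposition \ref{p:no ridge estimate}. No gaps.
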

\begin{proof}
 Lemma \ref{l:minimizer} yields that $\widetilde h_\lambda = (\widetilde Q+\lambda)^{-1}a$ for any $\lambda >0$. Since $\widetilde h_0$ is a minimizer for $\widetilde V$ we find that
  $$ \nabla \widetilde V(\widetilde h_0) = \bf0.$$
  Due to $\nabla \widetilde V(h) = 2 (\widetilde Qh - \widetilde a)$ for any $h\in H$ we see that $\widetilde Qh_0 = \widetilde a$. Note that $(\widetilde Q+\lambda)^{-1}\widetilde Q = 1 - \lambda(\widetilde Q+\lambda)^{-1}$. For $\lambda>0$ we calculate
    \begin{align*}
        \widetilde h_\lambda &= (\widetilde Q+\lambda)^{-1}\widetilde Q\widetilde h_0, \\
                  &= \widetilde h_0 - \lambda(\widetilde Q+\lambda)^{-1}\widetilde h_0.
    \end{align*} \Cref{l:strong_Q_convergence} implies
     $$ \limitlambda \widetilde h_\lambda = \widetilde h_0.$$
    The bound on $\|\widetilde h_\lambda - \widetilde h_0\|_{\Lc^2(\mu)} = \|\lambda(\widetilde Q+\lambda)^{-1}\widetilde h_0\|_{\Lc^2(\mu)}$ is given in Proposition \ref{p:no ridge estimate}.
\end{proof}

The $\mathcal L^2(\mu)$-norm and the RKHS norm have a relation, namely $\widetilde Q^{1/2}$ is an isometrie:
\begin{lemma}\label{l:Q-root isometric}
    Let $\widetilde Q^{1/2}$ be the positive semidefinite square root of $\widetilde Q\in L(H)$. Then, we have for any $h\in H$ that $h\in \mathcal L^2(\mu)$ and
     $$ \|h\|_{\mathcal L^2(\mu)} = \|\widetilde Q^{1/2}h\|.$$
\end{lemma}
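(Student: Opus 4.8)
The plan is to recognize that, for the $\mathcal L^2(\mu)$-norm on the right-hand side to make sense, the operator $\widetilde Q$ in the statement must be the specific operator $Q=\E[k_\Theta\otimes k_\Theta]$ from \Cref{l:Q properties}; with that understood, the proof is a short chain of identities already established in the excerpt. First I would recall from \Cref{l:Q properties} that $\<Qf,g\>=\E[f(\Theta)g(\Theta)]=\<f,g\>_{\mathcal L^2(\mu)}$ for all $f,g\in H$, which in particular gives $\|h\|_{\mathcal L^2(\mu)}^2=\<Qh,h\>$ for every $h\in H$.

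Second, since $\widetilde Q^{1/2}$ denotes the positive semidefinite square root of $Q$, it is self-adjoint and satisfies $\widetilde Q^{1/2}\widetilde Q^{1/2}=Q$, so
\[
\<Qh,h\> = \<\widetilde Q^{1/2}\widetilde Q^{1/2}h,h\> = \<\widetilde Q^{1/2}h,\widetilde Q^{1/2}h\> = \|\widetilde Q^{1/2}h\|^2 .
\]
Combining this with the previous identity yields $\|h\|_{\mathcal L^2(\mu)}^2=\|\widetilde Q^{1/2}h\|^2$, hence the claimed equality of norms.

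Finally, to justify the assertion $h\in\mathcal L^2(\mu)$, I would note that $h\in H\subset C(\Gamma,\R)$ is measurable, and that finiteness of the norm is immediate from $\|h\|_{\mathcal L^2(\mu)}^2=\<Qh,h\>\leq \|Q\|_{\op}\|h\|^2<\infty$ since $Q$ is bounded (alternatively one may invoke \Cref{l:norm of h(theta)}, which already gives $\E[h(\Theta)^4]<\infty$ and hence $\E[h(\Theta)^2]<\infty$, using $\Pbb^\Theta=\mu$). There is essentially no obstacle here: the only point requiring a word of care is the implicit identification $\widetilde Q=Q$, without which the $\mathcal L^2(\mu)$-norm on the right would be undefined; everything else is a direct consequence of the spectral/functional-calculus facts about positive square roots and of \Cref{l:Q properties}.
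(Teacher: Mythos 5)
Your proof is correct and follows essentially the same route as the paper's: both reduce to the identity $\|h\|_{\mathcal L^2(\mu)}^2=\E[h(\Theta)^2]=\<Qh,h\>=\|\widetilde Q^{1/2}h\|^2$ and obtain finiteness from boundedness of the operator. Your explicit remark that $\widetilde Q$ must be identified with the operator $Q$ of \Cref{l:Q properties} is a fair point of care that the paper's own proof also makes implicitly.
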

\begin{proof}
    Let $h\in H$. We find that 
     $$ \int_\Gamma h(\theta)^2 \mu(d\theta) = \E[h(\Theta)^2] = \<\widetilde Qh,h\> = \|\widetilde Q^{1/2}h\|^2 \leq \|\widetilde Q^{1/2}\|^2_{\mathrm{op}}\|h\|^2<\infty. $$
    Consequently, we find $h\in \mathcal L^2(\mu)$ and
     $$ \|h\|_{\mathcal L^2(\mu)} = \|\widetilde Q^{1/2}h\|_H.$$
\end{proof}

\section{Monte Carlo on a Hilbert space}

\begin{lemma}\label{l:MC in H}
    Let $(b_n)_{n\in\mathbb N}$ be identical distributed, square-integrable and uncorrelated 
    random variables on a Hilbert space. We have
     $$ \E\Big[ \Big\|\frac1N\sum_{n=1}^N b_n-\mu\Big\|^2 \Big] = \frac{\E[\|b_1-\mu\|^2]}{N}$$
     with $\mu:=\E[b_1]$.
\end{lemma}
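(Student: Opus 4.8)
The plan is to reduce the identity to the elementary scalar variance computation by centering the summands and expanding the Hilbert space norm. First I would introduce $c_n := b_n - \mu$ for $n \in \N$; since the $b_n$ are square-integrable they are Bochner-integrable, so $\E[c_n] = \mathbf{0}$, and because the $c_n$ are identically distributed we have $\E[\norm{c_n}^2] = \E[\norm{b_1 - \mu}^2]$ for every $n$. Writing $S_N := \frac1N \sum_{n=1}^N b_n - \mu = \frac1N \sum_{n=1}^N c_n$ and expanding the inner product gives
\[
\norm{S_N}^2 = \frac{1}{N^2} \sum_{i=1}^N \sum_{j=1}^N \<c_i, c_j\>.
\]

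Next I would take expectations and exploit linearity over the finite double sum. Each term $\<c_i, c_j\>$ is integrable: by Cauchy--Schwarz in $H$ we have $\lvert \<c_i, c_j\> \rvert \le \norm{c_i}\,\norm{c_j}$, and a second Cauchy--Schwarz gives $\E[\norm{c_i}\,\norm{c_j}] \le \sqrt{\E[\norm{c_i}^2]\,\E[\norm{c_j}^2]} < \infty$. Hence
\[
\E[\norm{S_N}^2] = \frac{1}{N^2} \sum_{i=1}^N \sum_{j=1}^N \E[\<c_i, c_j\>].
\]
For the diagonal terms $i = j$ we get $\E[\<c_i, c_i\>] = \E[\norm{c_i}^2] = \E[\norm{b_1 - \mu}^2]$. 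For the off-diagonal terms $i \ne j$ I claim $\E[\<c_i, c_j\>] = 0$: in the i.i.d.\ situation relevant to this paper, conditioning on $c_j$ and using the continuity of the linear functional $\<\cdot, c_j\>$ together with $\E[c_i \mid c_j] = \E[c_i] = \mathbf{0}$ gives this at once. Retaining the $N$ diagonal contributions and discarding the $N^2 - N$ vanishing ones yields $\E[\norm{S_N}^2] = \frac{1}{N^2} \cdot N \cdot \E[\norm{b_1 - \mu}^2] = \E[\norm{b_1 - \mu}^2]/N$, which is the assertion.

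The only point requiring a little care is the vanishing of the cross terms in the stated generality of merely \emph{uncorrelated} summands: there "uncorrelated" should be read as the cross-covariance operator $\E[(b_i - \mu) \otimes (b_j - \mu)]$ vanishing for $i \ne j$, equivalently $\E[\<b_i - \mu, x\>\,\<b_j - \mu, y\>] = 0$ for all $x, y \in H$. Expanding $\<c_i, c_j\> = \sum_m \<c_i, e_m\>\,\<c_j, e_m\>$ in an orthonormal basis $(e_m)$ of $H$, the summands are dominated by $\norm{c_i}\,\norm{c_j} \in \Lc^1(\Pbb)$, so dominated convergence lets me interchange expectation and the series and conclude $\E[\<c_i, c_j\>] = \sum_m 0 = 0$. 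Since in every application in the paper the $b_n$ are in fact independent, the conditioning argument above already suffices and no further delicacy arises.
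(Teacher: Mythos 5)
Your argument is correct and is essentially the paper's own proof: expand $\bigl\|\frac1N\sum_n (b_n-\mu)\bigr\|^2$ into diagonal and off-diagonal inner products, take expectations, and observe that the cross terms vanish by uncorrelatedness. The extra care you take in pinning down what ``uncorrelated'' means for Hilbert-space-valued variables and in justifying the interchange of expectation with the basis expansion is a reasonable elaboration of the step the paper states without comment, but it does not change the route.
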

\begin{proof}
    We derive
     $$ \Big\|\frac1N\sum_{n=1}^N b_n-\mu\Big\|^2 = \frac{1}{N^2}\sum_{n=1}^N\|b_n-\mu\|^2 + \frac1{N^2}\sum_{\underset{k\neq l}{k,l=1}}^N \<b_k-\mu,b_l-\mu\>. $$
    Taking expectation and using that the double-sum has zero expectation yields
     $$ \E\Big[ \Big\|\frac1N\sum_{n=1}^N b_n-\mu\Big\|^2 \Big] = \frac{\E[\|b_1-\mu\|^2]}{N}$$
     as required.
\end{proof}
In the following, we denote by $L_{HS}(H)$ the Hilbert Schmidt operators, see \cite[Section XI.6. Hilbert-Schmidt Operators]{dunford1988linear_spectral_theory}. 
\begin{corollary}\label{c:MC in HS}
    Let $(B_n)_{n\in\mathbb N}$ be identical distributed, square-integrable and uncorrelated $L_{HS}(H)$-valued random variables with $\E[\|B_1\|_{HS}^2]<\infty$. We have
     $$ \E\Big[ \Big\|\frac1N\sum_{n=1}^N B_n-\mu\Big\|_{HS}^2 \Big] = \frac{\E[\|B_1-\mu\|_{HS}^2]}{N}$$
     with $\mu:=\E[B_1]$.
\end{corollary}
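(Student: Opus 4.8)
The plan is to recognize that this corollary is an immediate specialization of \Cref{l:MC in H}: the space $L_{HS}(H)$ of Hilbert--Schmidt operators on $H$ is itself a Hilbert space when equipped with the inner product $\langle A,B\rangle_{HS}:=\mathrm{Tr}(A^*B)$, whose induced norm is exactly $\|\cdot\|_{HS}$ (see \cite[Section XI.6]{dunford1988linear_spectral_theory}). The statement to be proved is then nothing but the conclusion of \Cref{l:MC in H} applied with ambient Hilbert space $L_{HS}(H)$ and random variables $(B_n)_{n\in\mathbb N}$, so the whole proof consists of checking that the hypotheses transfer and then invoking \Cref{l:MC in H} verbatim.

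Concretely, the steps I would carry out are as follows. First, record that $(L_{HS}(H),\langle\cdot,\cdot\rangle_{HS})$ is a (real) Hilbert space. Second, verify the hypotheses of \Cref{l:MC in H} in this space: the $B_n$ are identically distributed by assumption; square-integrability is the assumed $\E[\|B_1\|_{HS}^2]<\infty$, which also gives $\E[\|B_1\|_{HS}]\le\sqrt{\E[\|B_1\|_{HS}^2]}<\infty$, so that $\mu=\E[B_1]$ is a well-defined Bochner integral in $L_{HS}(H)$; and the uncorrelatedness hypothesis is precisely the statement that $\E[\langle B_k-\mu,B_l-\mu\rangle_{HS}]=0$ for $k\neq l$, which is exactly what the proof of \Cref{l:MC in H} uses. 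Third, apply \Cref{l:MC in H} to conclude
$$ \E\Big[ \Big\|\tfrac1N\sum_{n=1}^N B_n-\mu\Big\|_{HS}^2 \Big] = \frac{\E[\|B_1-\mu\|_{HS}^2]}{N}. $$

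I do not expect any substantive obstacle; the only points needing a word of care are bookkeeping ones. One should make sure that ``uncorrelated'' for the $L_{HS}(H)$-valued random variables is read in the Hilbert-space sense above; if it is instead understood entrywise with respect to a fixed orthonormal basis of $H$, then summing the entrywise covariances over the basis recovers the $\langle\cdot,\cdot\rangle_{HS}$-statement, so either reading leads to the same conclusion. One should also note that the $B_n$ are genuinely Bochner measurable into $L_{HS}(H)$, which holds since $H$, and hence $L_{HS}(H)$, is separable in the settings considered here; and the expansion of $\|\tfrac1N\sum_n(B_n-\mu)\|_{HS}^2$ into diagonal and off-diagonal terms, together with the vanishing of $\E[\langle B_k-\mu,B_l-\mu\rangle_{HS}]$ for $k\neq l$, is already performed inside the proof of \Cref{l:MC in H}. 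Hence no new estimate is required and the corollary follows purely by instantiation.
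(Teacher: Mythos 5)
Your proposal is correct and follows exactly the paper's own proof: identify $L_{HS}(H)$ with the trace inner product as a Hilbert space and apply \Cref{l:MC in H} verbatim. The additional remarks on Bochner measurability and the reading of ``uncorrelated'' are sensible but not needed beyond what the paper records.
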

\begin{proof}
    The set of Hilbert Schmidt operators $L_{HS}(H)$ with the Hilbert-Schmidt norm $\|\cdot\|_{HS}$ is a Hilbert space, see \cite[p. 1011, 4 Theorem]{dunford1988linear_spectral_theory}. The corresponding scalar product is $\<A,C\>_{HS} = \mathrm{Tr}(AC^*)$. Consequently, Lemma \ref{l:MC in H} can be applied to the sequence $(B_n)_{n\in\mathbb N}$ on this Hilbert space.
\end{proof}
\section*{Acknowledgment}
Computational support from the Zentrum für
Informations- und Medientechnologie (ZIM) at Heinrich Heine University is
gratefully acknowledged.
\bibliography{references}
\bibliographystyle{alpha}
\end{document}